\def\jobis#1{FF\fi
  \def\predicate{#1}%
  \edef\predicate{\expandafter\strip@prefix\meaning\predicate}%
  \edef\job{\jobname}%
  \ifx\job\predicate
}
\if\jobis{proposal}%
 \renewcommand{\mod}{\ \operatorname{mod}}
\DeclareMathOperator{\Bs}{Bs}
\DeclareMathOperator{\Supp}{Supp}
\DeclareMathOperator{\Fix}{Fix}
\DeclareMathOperator{\Mov}{Mov}
\DeclareMathOperator{\Spec}{Spec}
 \numberwithin{equation}{subsection}
 \numberwithin{footnote}{subsection}
 \newtheorem{lem}[subsection]{Lemma}
 \newtheorem{thm}[subsection]{Theorem}
    \newtheoremstyle{upright}%
        {8pt plus2pt minus4pt}%
        {8pt plus2pt minus4pt}%
        {\upshape}%
        {}%
        {\bfseries\scshape}%
        {}%
        {1em}%
        {}%
\theoremstyle{upright}
 \newtheorem{defn}[subsection]{Definition}
 \newtheorem{exa}[subsection]{Example}
 \newtheorem{rem}[subsection]{Remark}
 \newcommand{\x}{\mathscr}
 \newcommand{\C}{\mathbb C}
 \newcommand{\PP}{\mathbb P}
 \newcommand{\Q}{\mathbb Q}
 \newcommand{\Z}{\mathbb Z}
 \newcommand{\bir}{\dashrightarrow}
 \newcommand{\rddown}[1]{\left\lfloor{#1}\right\rfloor} 
\title{Divisorial algebras and modules on schemes}
\author{Caucher Birkar}\thanks{2000 Mathematics Subject Classification: 14E30, 14E99, 14A99}
\date{\today}
\begin{document}
\maketitle

\begin{abstract}
We study certain modules over the algebra of a Cartier divisor on a scheme. 
Using these modules, we present an inductive method
for studying finite generation properties of algebras and modules. In the context of the 
minimal model program, we show that 
finite generation of log canonical algebras and modules is equivalent to 
the minimal model and abundance conjectures. 
\end{abstract}

\tableofcontents


\section{Divisorial algebras and modules}

Let $X$ be a projective scheme over a (commutative) Noetherian ring $A$. For any Cartier divisor $L$ on $X$ 
we have the graded ring 
$$
{R}(L):=\bigoplus_{m\ge 0} H^0(X,mL)
$$ where 
$m$ runs through the non-negative integers. This is actually a graded algebra over the 
ring $R_0:=H^0(X,\x{O}_X)=\x{O}_X(X)$. The structure morphism $X\to \Spec A$ gives a 
canonical ring homomorphism $A\to R_0$ making $R(L)$ an algebra over $A$. 
Since 
$R_0$ is a finitely generated $A$-module, ${R}(L)$ is a finitely 
generated $R_0$-algebra iff it is a finitely generated $A$-algebra. We refer to  
$R(L)$ as a \emph{divisorial algebra}.

The problem of finite generation of divisorial algebras is  fundamental 
 in algebraic geometry and we will see that divisorial modules naturally appear when one attempts 
to approach this problem. We study this problem in a very general setting and we are ultimately 
interested in the birational geometry of schemes. However, our methods are very relevant to 
the traditional setting of birational geometry of varieties over an algebraically closed field. 

 For each $\x{O}_X$-module $\x{F}$ on $X$ and each integer $p$, we have the 
graded ${R}(L)$-module ${M}_{\x{F}}^p(L)=\bigoplus_{m\in \Z}M_m$ where $M_m=0$ if $m<p$ but 
$$
M_m= H^0(X,\x{F}(mL))
$$
if $m\ge p$. Here $\x{F}(mL)$ stands for $\x{F}\otimes_{\x{O}_X} \x{O}_X(mL)$ and 
the module structure is given via the pairing
$$
H^0(X,mL) \otimes H^0(X,\x{F}(nL)) \to
  H^0(X,\x{F}((m+n)L))
$$
We refer to  ${M}_{\x{F}}^p(L)$ as a \emph{divisorial module}.  
When $\x{F}=\x{O}_X(D)$ for some divisor $D$ we often write ${M}_{D}^p(L)$
instead of ${M}_{\x{O}_X(D)}^p(L)$.
When $L=I(K_X+B)$ for a log canonical pair $(X,B)$ and integer $I>0$, we refer to ${R}(L)$ 
as a \emph{log canonical algebra} and refer to the module  
${M}_{\x{F}}^p(L)$ as a \emph{log canonical module}.

The above graded rings and modules are of course nothing new. Grothendieck 
studied them in some detail (for example see EGA III-1 [\ref{EGA}, \S 2]). 
Strictly speaking he considers all values $m\in \Z$ but in practice 
the summands $H^0(X,mL)$ and $H^0(X,\x{F}(mL))$ usually vanish for $m\ll 0$, eg 
when $L$ is ample. However, we look at divisorial modules in the sense of birational 
geometry, in particular, their finite generation properties. So, it
makes sense to restrict ourselves to non-negative $m$ or at least $m\ge p$ for some fixed $p$. 
We should remark that we could define algebras and modules using an invertible 
sheaf $\mathcal{L}$ in place of a Cartier divisor. Some of the results would still make sense 
and will be true in this setting (eg, Theorem 1.1).\\ 

{\textbf{Results of this paper.}} 
In section 2, we prove some of the basic properties of divisorial modules, in particular:

\begin{thm}
Let $X$ be a projective scheme over a Noetherian ring $A$ and $L$ a Cartier divisor 
on $X$ such that ${R}(L)$ is a finitely generated $A$-algebra. We fix an integer $p$ and 
an invertible sheaf 
$\x{O}_X(1)$ very ample over $\Spec A$. Then we have:

$\bullet$
Assume that ${M}_{\x{O}_X(l)}^p(L)$ is a finitely generated ${R}(L)$-module for any $l\gg 0$. 
Then ${M}_{\x{F}}^p(L)$ is a finitely generated ${R}(L)$-module for any 
reflexive coherent sheaf $\x{F}$. If $X$ is integral, the same holds for any 
torsion-free coherent sheaf $\x{F}$.

$\bullet$ 
Let $\x{F}$ be a coherent sheaf and $I>0$ an integer. 
For each $0\le i<I$, assume that ${M}_{\x{F}(iL)}^{q_i}(IL)$ is a finitely generated ${R}(IL)$-module 
where $q_i\in \Z$ is the smallest number satisfying $q_iI+i\ge p$. 
Then ${M}_{\x{F}}^p(L)$ is a finitely generated ${R}(L)$-module.
\end{thm}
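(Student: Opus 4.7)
The plan for the first bullet is to realise a reflexive coherent $\x{F}$ as the kernel of a map between direct sums of $\x{O}_X$-twists, and then invoke Noetherianity of $R(L)$. Since $\x{F}^\vee$ is coherent and $\x{O}_X(1)$ is very ample over $\Spec A$, for any sufficiently large $l_2$ one has a surjection $\x{O}_X(-l_2)^{N_2}\twoheadrightarrow \x{F}^\vee$; applying the same step to the (coherent) kernel yields an exact sequence
$$
\x{O}_X(-l_1)^{N_1}\longrightarrow \x{O}_X(-l_2)^{N_2}\longrightarrow \x{F}^\vee\longrightarrow 0
$$
with $l_1,l_2$ as large as one wishes. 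Dualising and using $\x{F}^{\vee\vee}\cong \x{F}$ produces a left exact sequence $0\to \x{F}\to \x{O}_X(l_2)^{N_2}\to \x{O}_X(l_1)^{N_1}$. Tensoring by the locally free sheaves $\x{O}_X(mL)$ preserves this, so taking $H^0$ and summing over $m\ge p$ yields a left exact sequence of graded $R(L)$-modules
$$
0\to M_{\x{F}}^p(L)\to M_{\x{O}_X(l_2)}^p(L)^{N_2}\to M_{\x{O}_X(l_1)}^p(L)^{N_1}.
$$
Choosing $l_1,l_2\gg 0$ so the hypothesis applies to both, the right-hand modules are finitely generated. Since $R(L)$ is finitely generated over the Noetherian ring $A$, it is itself Noetherian, so the submodule $M_{\x{F}}^p(L)$ is finitely generated.

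For the torsion-free case on integral $X$, the plan is to reduce to the reflexive case by noting that the canonical map $\x{F}\to \x{F}^{\vee\vee}$ has torsion kernel, hence is injective when $\x{F}$ is torsion-free, while $\x{F}^{\vee\vee}$ is reflexive coherent. The previous paragraph gives finite generation of $M_{\x{F}^{\vee\vee}}^p(L)$, and left exactness of $H^0$ realises $M_{\x{F}}^p(L)$ as an $R(L)$-submodule of it, so Noetherianity concludes.

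For the second bullet, the plan is to regard $M_{\x{F}}^p(L)$ as an $R(IL)$-module by splitting its grading modulo $I$. For $0\le i<I$, writing $m=qI+i$ identifies $H^0(X,\x{F}(mL))=H^0(X,\x{F}(iL)(qIL))$, and the condition $m\ge p$ reads precisely $q\ge q_i$ by the definition of $q_i$. Collecting terms by residue class modulo $I$ then yields an isomorphism of $R(IL)$-modules
$$
M_{\x{F}}^p(L)\;\cong\;\bigoplus_{i=0}^{I-1} M_{\x{F}(iL)}^{q_i}(IL),
$$
a finite direct sum of finitely generated $R(IL)$-modules by hypothesis. So $M_{\x{F}}^p(L)$ is finitely generated over $R(IL)$, and since $R(IL)\subseteq R(L)$ is the Veronese subalgebra consisting of graded pieces of degree divisible by $I$, it is a fortiori finitely generated over $R(L)$.

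The only genuinely delicate step is the dualisation in the reflexive case: one has to verify that the $\x{O}_X$-linear sheaf maps induce $R(L)$-linear maps of graded modules (immediate from the definition of the multiplication pairing) and that $l_1,l_2$ can simultaneously be taken large enough for the hypothesis to apply. Everything else is formal bookkeeping, combined with Noetherianity of $R(L)$ and left exactness of $H^0$.
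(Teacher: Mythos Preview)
Your argument is correct and follows essentially the same route as the paper. For the first bullet the paper only dualises a single surjection $\bigoplus_j \x{O}_X(-l_j)\twoheadrightarrow \x{F}^\vee$ to obtain an injection $\x{F}\hookrightarrow \bigoplus_j \x{O}_X(l_j)$, whereas you carry a two-step resolution through; the second step (and the module $M_{\x{O}_X(l_1)}^p(L)^{N_1}$) is never used, since you only need that $M_{\x{F}}^p(L)$ sits inside a finitely generated module over the Noetherian ring $R(L)$. For the second bullet your decomposition by residue classes modulo $I$ is exactly the paper's, the only cosmetic difference being that the paper tracks the grading shift between $R(IL)$ and the truncation $R(L)^{[I]}$ more explicitly.
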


In section 3, we give necessary and sufficient conditions for algebras and modules to be finitely 
generated in terms of finite generation of restriction of algebras and modules 
to subschemes:

\begin{thm}
Let $X$ be a projective scheme over a Noetherian ring $A$ and $L=\sum l_iL_i$ a Cartier divisor
where $l_i> 0$ are integers and $L_i\neq 0$ are effective Cartier divisors. Assume  
that $H^0(X,-L_1)=0$. Fix an invertible sheaf $\x{O}_X(1)$ very ample over $\Spec A$. 
Then, the following are equivalent:

$\bullet$ $R(L)$ is a finitely generated $A$-algebra and $M^p_{\x{F}}(L)$ is a finitely generated 
$R(L)$-module for any reflexive coherent sheaf $\x{F}$ and any $p$;

$\bullet$ for each $S=L_j$, the restriction $R(L)|_{S}$ is a finitely generated $A$-algebra, and  
 the restriction $M^0_{\x{O}_X(l)}(L)|_S$ is a finitely generated $R(L)|_S$-module 
for any $l\gg 0$. 
\end{thm}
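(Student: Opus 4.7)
The direction (a)$\Rightarrow$(b) is immediate: by construction $R(L)|_S$ is the graded image of $R(L)$ under the natural restriction $R(L)\to R(L|_S)$, hence a graded $A$-algebra quotient of $R(L)$ and inherits finite generation; similarly $M^0_{\x{O}_X(l)}(L)|_S$ is a graded quotient of $M^0_{\x{O}_X(l)}(L)$ as an $R(L)$-module, and via the surjection $R(L)\twoheadrightarrow R(L)|_S$ it is a finitely generated $R(L)|_S$-module.

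For (b)$\Rightarrow$(a), the plan is to prove two substatements and then invoke Theorem~1.1: (A) $R(L)$ is a finitely generated $A$-algebra, and (B) $M^0_{\x{O}_X(l)}(L)$ is a finitely generated $R(L)$-module for every $l\gg 0$. Once (A) is in hand, $R(L)$ is Noetherian, so (B) promotes to finite generation of $M^p_{\x{O}_X(l)}(L)$ for every $p\in\Z$ (these differ from $M^0_{\x{O}_X(l)}(L)$ in only finitely many degrees, each a finitely generated $A$-module), and then the first bullet of Theorem~1.1 yields finite generation of $M^p_{\x{F}}(L)$ for every reflexive coherent $\x{F}$ and every $p$. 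For (A) itself I would use the short exact sequence
\[ 0 \longrightarrow K \longrightarrow R(L) \longrightarrow R(L)|_{L_1} \longrightarrow 0, \qquad K:=M^0_{\x{O}_X(-L_1)}(L). \]
The hypothesis $H^0(X,-L_1)=0$, which propagates to $H^0(X,-kL_1)=0$ for all $k\ge 1$ by injectivity of multiplication by the canonical section of $L_1$, forces $K$ to vanish in degree $0$, so $K\subset R(L)_{\ge 1}$. Granting momentarily that $K$ is a finitely generated $R(L)$-module, say by homogeneous $b_1,\dots,b_s$ in degrees $\ge 1$, and lifting $A$-algebra generators of $R(L)|_{L_1}$ to elements $a_1,\dots,a_r\in R(L)$, combining with the $b_j$ gives a finite generating set of $R(L)$ over $A$ by a straightforward induction on degree (the $\ge 1$ degree bound on $K$ is essential here). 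Thus (A) reduces to the finite generation of $K$ as an $R(L)$-module.

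To establish (B) jointly with the finite generation of $K$, my strategy is to iterate the exact sequences
\[ 0 \longrightarrow M^0_{\x{F}(-L_j)}(L) \longrightarrow M^0_{\x{F}}(L) \longrightarrow M^0_{\x{F}}(L)|_{L_j} \longrightarrow 0, \]
using that finite generation of the two outer $R(L)$-modules forces it on the middle (no Noetherian hypothesis on $R(L)$ needed). Hypothesis~(b) supplies finite generation of the rightmost term over $R(L)|_{L_j}$ when $\x{F}=\x{O}_X(l)$ for $l\gg 0$, letting one descend from $\x{O}_X(l)$ to $\x{O}_X(l)(-kL_1)$; the vanishings $H^0(X,-kL_1)=0$ together with Noetherian stabilization of the decreasing chains $\{H^0(X,\x{O}_X(l)(mL-kL_1))\}_k$ inside the finitely generated $A$-module $H^0(X,\x{O}_X(l)(mL))$ are to be used to terminate the iteration. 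The principal obstacle — and the technical heart of the proof — is twofold: first, the apparent circularity that Theorem~1.1, the natural tool for transferring finite generation from $\x{O}_X(l)$ to the reflexive sheaf $\x{O}_X(-L_1)$, presupposes (A), while (A) has been reduced to finite generation of $K=M^0_{\x{O}_X(-L_1)}(L)$; second, that after a single descent step the twist $\x{O}_X(l)(-L_1)$ is no longer of the form $\x{O}_X(l')$ for $l'\in\Z$, so (b) no longer directly controls the cokernels $M^0_{\x{O}_X(l)(-kL_1)}(L)|_{L_j}$ for $k\ge 1$. Both obstructions must be resolved together, by running the inductive arguments for (A) and (B) in tandem and exploiting hypothesis~(b) across all $L_j$ (not merely $L_1$) to keep the chain of cokernels under control.
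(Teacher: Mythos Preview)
Your diagnosis of the two obstacles is accurate, but the proposal stops short of resolving them, and the suggested workaround --- running (A) and (B) ``in tandem'' with termination via degreewise Noetherian stabilization --- does not close the gap. Stabilization of $\{H^0(X,\x{O}_X(l)(mL-kL_1))\}_k$ for each fixed $m$ says nothing about finite generation of the full graded module $M^0_{\x{O}_X(l)(-kL_1)}(L)$, and there is no evident mechanism by which the circular dependence between (A) and the finite generation of $K$ becomes productive. As it stands, the proof is incomplete at exactly the point you flag as ``the technical heart''.

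The paper dissolves both obstacles with a single observation you are missing: rather than trying to control $M^0_{\x{P}}(L)$ for a locally free sheaf $\x{P}$ (such as $\x{O}_X(-C)$ or $\x{O}_X(l)(-C)$) via the as-yet-unknown ring $R(L)$, one controls its \emph{restriction} $M^0_{\x{P}}(L)|_S$ via the ring $R(L)|_S$, which is Noetherian \emph{by hypothesis}. Concretely, dualize a presentation $\bigoplus_1^n\x{O}_X(-l_i)\twoheadrightarrow\x{P}^\vee$ to get a short exact sequence $0\to\x{P}\to\x{E}_1\to\x{E}_2\to 0$ with $\x{E}_1=\bigoplus_1^n\x{O}_X(l_i)$, $l_i\gg 0$, and $\x{E}_2$ locally free; tensoring with $\x{O}_S$ keeps the sequence exact, so $M^0_{\x{P}}(L)|_S\hookrightarrow M^0_{\x{E}_1}(L)|_S$. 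Hypothesis (b) says $R(L)|_S$ is a finitely generated $A$-algebra, hence Noetherian, and that $M^0_{\x{E}_1}(L)|_S$ is a finitely generated $R(L)|_S$-module; therefore so is its submodule $M^0_{\x{P}}(L)|_S$. This one step supplies, for every $S=L_j$ and every locally free $\x{P}$, exactly the restricted cokernels your iteration needs (i.e.\ the hypotheses of Theorems~\ref{c-dm-2} and~\ref{t-dm-1}), yielding first (A) and then (B) with no circularity. The leverage comes from the Noetherianity of $R(L)|_S$, not of $R(L)$ --- that is the idea your proposal lacks.
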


Using this theorem and the available extension theorems (eg, Hacon-M$\rm ^c$Kernan) it should 
not be too difficult (but not simple) to give another proof of finite generation of 
lc rings of klt pairs even without the minimal model program. We hope that the theorem 
will be useful for proving finite generation of lc rings of lc pairs.
  
In section 4, we show that finite generation of log canonical algebras and modules 
is closely related to the minimal model and abundance conjectures:

\begin{thm}
Assume that $(X/Z,B)$ is lc and assume that there is a positive 
integer $I$ such that $L:=I(K_X+B)$ is Cartier and pseudo-effective$/Z$ 
where $Z=\Spec A$. 
Then, $(X/Z,B)$ has a log minimal model $(Y/Z,B_Y)$ on 
which $K_Y+B_Y$ is semi-ample$/Z$ iff 

$\bullet$ $R(L)$ is a finitely generated $A$-algebra, and 

$\bullet$ for any very ample$/Z$ divisor $G$ 
the module $M_{G}^0(L)$ is finitely generated over $R(L)$.\\ 
\end{thm}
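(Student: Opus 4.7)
The plan is to start from the semi-ampleness of $K_Y + B_Y$ on the log minimal model $(Y/Z, B_Y)$: this yields a contraction $\phi \colon Y \to T/Z$ and, after replacing $I$ by a suitable multiple, an ample$/Z$ Cartier divisor $H$ on $T$ with $NL_Y = \phi^*H$ where $L_Y = I(K_Y+B_Y)$ and $N > 0$. Since $H$ is ample and $\phi_*\x{O}_Y = \x{O}_T$, the ring $R(NL_Y) \cong R(H)$ is a finitely generated $A$-algebra. For each $0 \le i < N$, the projection formula identifies
$$
\bigoplus_m H^0\bigl(Y,\, iL_Y + m\phi^*H\bigr) \;\cong\; \bigoplus_m H^0\bigl(T,\, \phi_*\x{O}_Y(iL_Y) \otimes \x{O}_T(mH)\bigr),
$$
which is finitely generated over $R(H)$ by ampleness of $H$. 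Invoking Theorem 1.1 (second bullet) gives finite generation of $R(L_Y)$, and the identification $H^0(X,mL) = H^0(Y, mL_Y)$ built into the definition of log minimal model then yields $R(L) \cong R(L_Y)$. For a very ample$/Z$ divisor $G$ on $X$, an analogous projection-plus-Theorem 1.1 argument -- after comparing the sections of $G + mL$ on $X$ with the sections of a suitable lift of $G$ on $Y$ via a common log resolution and the discrepancy inequality of the log minimal model -- will give finite generation of $M_G^0(L)$.

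\textbf{Backward direction ($\Leftarrow$).} Here the plan is to construct the log minimal model from the finite generation data. Since $R(L)$ is finitely generated, $T := \Proj R(L)$ is projective over $\Spec A$ and carries an ample$/Z$ divisor $H$ with $R(JL) \cong R(H)$ for some divisible $J > 0$. I would choose a common log resolution $\pi \colon W \to X$ with a morphism $\phi \colon W \to T$ so that $\pi^*(JL) = M_W + F_W$, where $M_W = \phi^*H$ is base point free and $F_W$ is the fixed part. The idea is then to use finite generation of $M_G^0(L)$ for arbitrary very ample $G$ to control the push-forwards $\phi_*\x{O}_W(\pi^*G + mM_W)$ in a uniform way -- in effect playing the role of a Zariski-type decomposition and forcing $F_W$ to coincide, up to $\phi$-exceptional divisors, with the negative part of $\pi^*\bigl(IJ(K_X + B)\bigr)$. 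Setting $Y := T$ (possibly after a small birational modification) and $B_Y$ the strict transform of $B$, the decomposition $\pi^*(K_X+B) = \tfrac{1}{IJ}M_W + \tfrac{1}{IJ}F_W$ should then let me verify the discrepancy condition making $(Y/Z, B_Y)$ a log minimal model, with $K_Y+B_Y$ semi-ample$/Z$ via $\phi$ (descending the base point free $M_W$).

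\textbf{Main obstacle.} The hard part will be the backward direction, and specifically the verification of the discrepancy inequality in the definition of log minimal model. The obstacle is that in the lc (non-klt) setting one does not have access to a Zariski decomposition of $\pi^*(K_X+B)$ on $W$, so the pieces $M_W$ and $F_W$ extracted from finite generation of $R(L)$ alone need not match the positive and negative parts in any geometric sense. The role of the module finite generation of $M_G^0(L)$ is meant to be exactly the substitute: it should control how auxiliary sections of $G + mL$ interact with $F_W$, pinning down its $\phi$-exceptional nature and thereby the log minimal model property.
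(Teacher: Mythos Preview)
Your forward direction is essentially the paper's argument (Theorem~4.1): pass to the log minimal model, use the semi-ample fibration $Y\to T$, apply the projection formula, and compare $M_G^0(L)$ with a module on $Y$ via a common resolution. Fine.

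The backward direction, however, has a genuine gap. Your plan is to set $Y:=\Proj R(L)$ up to a ``small birational modification'' and then verify discrepancies. But $\Proj R(L)$ is the log \emph{canonical} model, not a log minimal model: the induced rational map $X\dashrightarrow T$ will in general contract divisors on which $K_X+B$ is not negative, and $T$ need not be $\Q$-factorial or even have lc singularities of the right type. The phrase ``possibly after a small birational modification'' is hiding the entire difficulty---producing that modification is precisely the construction of a minimal model over $T$, and you give no mechanism for it. Your proposed use of $M_G^0(L)$ to ``pin down the $\phi$-exceptional nature'' of $F_W$ is too vague to bridge this: even if $F_W$ were $\phi$-exceptional, that alone does not produce a variety $Y$ birational to $X$ with the required discrepancy inequalities.

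The paper takes a completely different route (Theorem~4.5). After extracting the Mov/Fix decomposition $f^*JL=F+E$ on a resolution $W$ (Theorem~4.3), it \emph{runs an LMMP} on $K_W+B_W+rF'$ with scaling of an ample $G$, over the base $T$ defined by $|F|$; here $r\gg 0$ and [BCHM] guarantees the scaling parameters $\lambda_i\to 0$. The module condition enters through Theorem~4.4, which says that finite generation of $M_G^0(L)$ forces $\Supp\Fix(m(f^*JL+rF)+G)=\Supp E$ for $m\gg 0$. Combined with Nakayama's base-locus result (Theorem~4.6), this contradicts the existence of a component of $E$ surviving the LMMP, so $E$ is contracted and the LMMP terminates on a model where $K+B$ is a multiple of the free $F'$. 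Thus the paper builds the minimal model by running an MMP and uses the module hypothesis to prove \emph{termination}, not to directly identify $Y$ with $\Proj R(L)$.
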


{\textbf{Conventions.}} 
In this paper, all rings are commutative with identity. A graded ring is of the form 
$R=\bigoplus_{m\ge 0}R_m$, that is, graded by non-negative integers, and 
a graded module is of the form ${M}=\bigoplus_{m\in \Z}M_m$, that is, it is 
graded by the integers. For an element $(\dots,\alpha,\dots)$ of degree $m$ we often 
abuse notation and just write $\alpha$ but keep in mind that $\alpha$ has degree $m$.
A graded ring $R=\bigoplus_{m\ge 0}R_m$ is called an $A$-algebra if there is a homomorphism 
$A\to R_0$ and multiplication of elements of $A$ with elements of $R$ is induced by $A\to R_0$.

\vspace{0.5cm}
\section{Basic properties of divisorial modules}

\begin{defn}
If $R=\bigoplus_{m\ge 0} R_m$ is a graded ring and $I$ a positive integer, we define 
the truncated ring $R^{[I]}=\bigoplus_{m\ge 0} R_m'$ as $R'_m=R_m$ if $I|m$ and $R_m'=0$ 
otherwise. Note that the degree structure is different from the usual definition of 
truncation. However, it is more convenient for us to define it in this way.
\end{defn}

\begin{rem}[Truncation principle for algebras]
Fix a positive integer $I$. Assume that $R$ is a graded ring, $R_0$ is a Noetherian ring, 
and $R$ is an integral domain. Then $R$ is a finitely generated $R_0$-algebra 
iff  $R^{[I]}$ is a finitely generated $R_0$-algebra. 
\end{rem}

A similar statement for modules is less straightforward.

\begin{rem}[Truncation principle for modules]\label{r-lcm-truncation}
Fix a positive integer $I$. Let $R$ be a graded ring 
and let $M=\bigoplus_{m\in \Z} M_m$ be a graded $R$-module. 
Let $N_i=\bigoplus_{m\in \Z} N_{m,i}$ where $N_{m,i}=M_m$ if $m\equiv i (\mod I)$ but 
$N_{m,i}=0$ otherwise. Then, each $N_i$ is a graded module over $R^{[I]}$ and 
we have the decomposition 
$$
M\simeq N_0\oplus N_1\oplus \cdots \oplus N_{I-1}
$$ 
as graded $R^{[I]}$-modules.
If the modules $N_0,\dots,N_{I-1}$ are finitely generated over $R^{[I]}$, 
then $M$ is also a finitely generated $R^{[I]}$-module hence a finitely 
generated $R$-module too.
\end{rem} 

\begin{lem}\label{r-change-p}
Let $X$ be a projective scheme over a Noetherian ring $A$. Assume that ${M}_{\x{F}}^p(L)$ is a finitely 
generated $R(L)$-module where $\x{F}$ is coherent. Then 
${M}_{\x{F}}^q(L)$ is also a finitely generated $R(L)$-module for any $q<p$.
If in addition $R(L)$ is a finitely generated $A$-algebra, then 
${M}_{\x{F}}^q(L)$ is a finitely generated $R(L)$-module for any $q$.
\end{lem}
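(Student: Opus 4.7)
The plan is to separate the two cases $q<p$ and $q>p$, since they require entirely different arguments. For $q<p$, the module $M_{\mathcal{F}}^q(L)$ differs from $M_{\mathcal{F}}^p(L)$ only in the finitely many graded pieces $H^0(X,\mathcal{F}(mL))$ with $q\le m<p$. For $q>p$, the module $M_{\mathcal{F}}^q(L)$ sits inside $M_{\mathcal{F}}^p(L)$ as a graded $R(L)$-submodule obtained by killing the pieces in degrees $p\le m<q$.

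For the case $q<p$, first I would observe that since $X$ is projective over the Noetherian ring $A$ and $\mathcal{F}(mL)$ is coherent, each $H^0(X,\mathcal{F}(mL))$ is a finitely generated $A$-module, hence so is the finite direct sum
$$
N \;=\; \bigoplus_{q\le m<p} H^0(X,\mathcal{F}(mL)).
$$
Pick a finite $A$-generating set of $N$. Pick also a finite $R(L)$-generating set of $M_{\mathcal{F}}^p(L)$, which exists by hypothesis. Since $A\to R_0\subset R(L)$, any $A$-generators of $N$ are $R(L)$-generators of $N$ (viewed inside $M_{\mathcal{F}}^q(L)$), and elements of $R(L)$ have non-negative degree, so multiplying them against elements of degree $\ge q$ keeps us in $M_{\mathcal{F}}^q(L)$. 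Thus the union of the two generating sets generates $M_{\mathcal{F}}^q(L)$ over $R(L)$.

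For the case $q>p$ (which requires the additional hypothesis that $R(L)$ is a finitely generated $A$-algebra), I would note that $M_{\mathcal{F}}^q(L)$ is precisely the graded $R(L)$-submodule of $M_{\mathcal{F}}^p(L)$ consisting of elements of degree $\ge q$; indeed, for $r\in R_k\subset R(L)$ with $k\ge 0$ and $x\in M_{\mathcal{F}}^q(L)$ of degree $m\ge q$, the product $rx$ has degree $m+k\ge q$. Under the hypothesis, $R(L)$ is a finitely generated algebra over the Noetherian ring $A$, hence itself Noetherian. Therefore every submodule of the finitely generated $R(L)$-module $M_{\mathcal{F}}^p(L)$ is finitely generated, in particular $M_{\mathcal{F}}^q(L)$ is.

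There is really no serious obstacle here; the lemma is a bookkeeping statement. The only point to watch is that one cannot handle $q>p$ without Noetherianness of $R(L)$, which is why that case needs the extra hypothesis, whereas for $q<p$ it suffices to add finitely many $A$-module generators coming from the new lower-degree pieces.
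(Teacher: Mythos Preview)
Your proposal is correct and follows essentially the same approach as the paper: for $q<p$ you adjoin finitely many generators coming from the extra low-degree pieces (which are finitely generated $A$-modules since $\mathcal{F}$ is coherent and $X$ is projective over Noetherian $A$), and for $q>p$ you use that $M_{\mathcal{F}}^q(L)$ is a submodule of the Noetherian module $M_{\mathcal{F}}^p(L)$. One small phrasing issue: $N$ is not itself an $R(L)$-submodule of $M_{\mathcal{F}}^q(L)$, so saying the $A$-generators of $N$ are ``$R(L)$-generators of $N$'' is imprecise; what you mean (and what the argument needs) is simply that these elements, together with the generators of $M_{\mathcal{F}}^p(L)$, generate all of $M_{\mathcal{F}}^q(L)$ over $R(L)$.
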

\begin{proof}
The elements of ${M}_{\x{F}}^q(L)$ of degree $<p$ are given by 
$$
H^0(X,\x{F}(qL))\oplus \cdots \oplus H^0(X,\x{F}((p-1)L))
$$ 
and this is a finitely generated $R_0=H^0(X,\x{O}_X)$-module as $\x{F}$ is coherent.
The elements of ${M}_{\x{F}}^q(L)$ of degree $\ge p$ are given by ${M}_{\x{F}}^p(L)$.
So, the first claim follows.
 The second statement follows from the fact that if $p\le q$, then we have an  
inclusion ${M}_{\x{F}}^q(L)\subseteq {M}_{\x{F}}^p(L)$, 
and $R(L)$ and ${M}_{\x{F}}^p(L)$ are Noetherian.\\
\end{proof}

\begin{thm}\label{t-lmc-ample-to-general}
Let $X$ be a projective scheme over a Noetherian ring $A$ and $L$ a Cartier divisor 
on $X$ such that ${R}(L)$ is a finitely generated $A$-algebra. We fix an integer $p$ and 
an invertible sheaf 
$\x{O}_X(1)$ very ample over $\Spec A$. Then we have:

$(1)$
Assume that ${M}_{\x{O}_X(l)}^p(L)$ is a finitely generated ${R}(L)$-module for any $l\gg 0$. 
Then ${M}_{\x{F}}^p(L)$ is a finitely generated ${R}(L)$-module for any 
reflexive coherent sheaf $\x{F}$. If $X$ is integral, the same holds for any 
torsion-free coherent sheaf $\x{F}$.

$(2)$ 
Let $\x{F}$ be a coherent sheaf and $I>0$ an integer. 
For each $0\le i<I$, assume that ${M}_{\x{F}(iL)}^{q_i}(IL)$ is a finitely generated ${R}(IL)$-module 
where $q_i\in \Z$ is the smallest number satisfying $q_iI+i\ge p$. 
Then ${M}_{\x{F}}^p(L)$ is a finitely generated ${R}(L)$-module.
\end{thm}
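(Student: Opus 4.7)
The plan is to prove (1) by embedding $\x{F}$ into a direct sum of twists of $\x{O}_X$ and using the assumed finite generation for such twists together with the Noetherian property of $R(L)$, and to prove (2) by invoking the truncation principle for modules (Remark \ref{r-lcm-truncation}) to reduce to the hypothesis on $R(IL)$-modules.

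For (1), I first observe that $R(L)$, being a finitely generated algebra over the Noetherian ring $A$, is itself Noetherian, so any submodule of a finitely generated graded $R(L)$-module is again finitely generated. It therefore suffices to exhibit an injection of graded $R(L)$-modules of $M^p_{\x{F}}(L)$ into one that is already known to be finitely generated. For reflexive $\x{F}$, I would use Serre's theorem to find $l\gg 0$ for which $\x{F}^\vee(l)$ is globally generated; this yields a surjection $\x{O}_X(-l)^{\oplus b} \twoheadrightarrow \x{F}^\vee$, and dualizing together with the identification $\x{F} \simeq (\x{F}^\vee)^\vee$ produces an inclusion $\x{F} \hookrightarrow \x{O}_X(l)^{\oplus b}$. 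Twisting by $\x{O}_X(mL)$ and taking $H^0$ (left exact) then induces an inclusion $M^p_{\x{F}}(L) \hookrightarrow M^p_{\x{O}_X(l)}(L)^{\oplus b}$ of graded $R(L)$-modules, and for $l$ sufficiently large the right-hand side is finitely generated by hypothesis. For the torsion-free case on integral $X$, I would directly produce an embedding $\x{F} \hookrightarrow \x{O}_X(l)^{\oplus r}$ by combining the generic-point identification $\x{F}_\eta \simeq K(X)^{\oplus r}$ with a clearing-of-denominators argument using a sufficiently ample effective divisor, then argue exactly as before.

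For (2), I apply the truncation principle for modules to $M := M^p_{\x{F}}(L)$ for the given integer $I$, obtaining a decomposition
$$
M \simeq N_0 \oplus N_1 \oplus \cdots \oplus N_{I-1}
$$
as graded $R(L)^{[I]}$-modules, with $N_i$ collecting the summands in degrees $m \equiv i \pmod{I}$. The key identification is that, after rescaling the grading to identify $R(L)^{[I]}$ with $R(IL)$ and shifting degrees by $i$, the piece $N_i$ is precisely $M^{q_i}_{\x{F}(iL)}(IL)$: indeed, writing $m = kI + i$, the component $H^0(X, \x{F}(mL)) = H^0(X, \x{F}(iL)(kIL))$ sits in $N_i$ exactly when $m\ge p$, i.e.\ when $k\ge q_i$. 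Each $M^{q_i}_{\x{F}(iL)}(IL)$ is finitely generated over $R(IL)$ by hypothesis, so each $N_i$ is finitely generated over $R(L)^{[I]}$; the direct sum is then finitely generated over $R(L)^{[I]}\subseteq R(L)$, hence over $R(L)$.

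The step I expect to be the main obstacle is the torsion-free case in (1): producing the embedding $\x{F} \hookrightarrow \x{O}_X(l)^{\oplus r}$ when $X$ is integral but not assumed normal requires a careful generic-point-plus-denominators argument rather than an appeal to reflexivization. Once that embedding is in hand, the rest of (1) is the Noetherian submodule argument, and (2) is essentially a formal consequence of the truncation principle together with the indexing bookkeeping $q_iI+i\ge p$.
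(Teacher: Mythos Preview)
Your argument for the reflexive case of (1) and for (2) matches the paper's proof almost verbatim: surject onto $\x{F}^\vee$ by a sum of negative twists, dualize to embed $\x{F}\simeq\x{F}^{\vee\vee}$ in a sum $\x{E}$ of positive twists, then use that $R(L)$ is Noetherian; and for (2), decompose $M_{\x{F}}^p(L)$ into residue-class pieces $N_i$ as in Remark~\ref{r-lcm-truncation} and identify each, after regrading, with $M_{\x{F}(iL)}^{q_i}(IL)$.

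The one place you diverge is the torsion-free case of (1). You propose a direct generic-point-plus-clearing-of-denominators construction to embed $\x{F}$ into $\x{O}_X(l)^{\oplus r}$, and you correctly flag this as the delicate step. The paper bypasses it entirely: on an integral scheme the natural map $\x{F}\to\x{F}^{\vee\vee}$ is injective whenever $\x{F}$ is torsion-free (this is the cited input from Hartshorne and Schwede), and since $\x{F}^{\vee\vee}$ is reflexive one simply applies the already-proved reflexive case to it and then uses the Noetherian submodule argument for $M^p_{\x{F}}(L)\hookrightarrow M^p_{\x{F}^{\vee\vee}}(L)$. This replaces your denominator chase with a one-line reduction. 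Your route can be made to work, but the double-dual shortcut is cleaner and avoids having to argue that the local denominators assemble into a single effective Cartier divisor on a possibly non-normal $X$.
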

\begin{proof}
(1)  The first statement: there is a surjective morphism 
$\bigoplus_{j=1}^r \x{O}_X(-l_j)\to \x{F}^{\vee}$ 
where $l_j\gg 0$ and $\vee$ stands for dual. Taking the dual of this 
morphism gives an injective morphism  
$$
\x{F}\simeq \x{F}^{\vee\vee} \to \x{E}=\bigoplus_{j=1}^r \x{O}_X(l_j)
$$
which in turn gives  an injective map ${M}^p_{\x{F}}(L)\to {M}^p_{\x{E}}(L)$. 
By assumptions,  
${M}^p_{\x{E}}(L)$ is finitely generated over ${R(L)}$ which in particular means that 
${M}^p_{\x{E}}(L)$ is Noetherian as $R(L)$ is Noetherian. Therefore, each submodule of ${M}^p_{\x{E}}(L)$ 
is also finitely generated over ${R}(L)$, in particular, ${M}^p_{\x{F}}(L)$.

The second statement: since $X$ is integral, if $\x{F}$ is any torsion-free 
coherent sheaf, the natural morphism $\x{F}\to \x{F}^{\vee\vee}$ is injective 
(cf. [\ref{Hartshorne}][\ref{Schwede}, Lemma 2.5]). 
So, we get an injective map $M^p_\x{F}(L)\to M^p_{\x{F}^{\vee\vee}}(L)$ and the 
claim follows from the first statement as $\x{F}^{\vee\vee}$ is a reflexive sheaf.

(2) We can write ${M}_{\x{F}}^p(L)\simeq N_0\oplus N_1\oplus \cdots \oplus N_{I-1}$ as in Remark 
\ref{r-lcm-truncation}. 
 Let $N_i'$ be the module over 
$R(IL)$ whose $n$-th degree summand is just $N_{nI+i,i}=M_{nI+i}$. 
In fact, $N_i'=M_{\x{F}(iL)}^{q_i}(IL)$ where $q_i\in \Z$ is the smallest number 
satisfying $q_iI+i\ge p$.
By assumptions, $N_i'$ is a finitely generated $R(IL)$-module. Therefore, 
$N_i$ is a finitely generated $R(L)^{[I]}$-module and we can use Remark \ref{r-lcm-truncation}. 
Note that the degree $m$ elements of $R(IL)$ are the same as the degree $mI$ elements of 
$R(L)^{[I]}$, and the degree $n$ elements of $N_i'$ are the same as the degree $nI+i$ elements of 
$N_i$.\\
\end{proof}

\begin{exa}
In this example, we show that Theorem \ref{t-lmc-ample-to-general} (1) does not hold if 
we drop the relfexive and torsion-free properties. 
Take a Lefschetz pencil on $\PP^n_\C$ where $n\ge 2$, 
blow up the base locus to get $g\colon X\to \PP^n_\C$, and 
let $f\colon X\to \PP^1_\C$ 
be the corresponding Lefschetz fibration. 
Let $G$ be a very ample divisor on $X$ and 
$L$ the effective exceptional$/\PP^n_\C$ divisor satisfying $g^*g_*G=G+L$. 
Since $L$ is exceptional and effective, we have 
$$
R(L)=\C\oplus \C \oplus \cdots 
$$
which is a finitely generated $\C$-algebra. Moreover, for each $m>0$, 
$$
H^0(X,mL+G)=H^0(X,(m-1)L+g^*g_*G)=H^0(X,g^*g_*G)
$$
which is independent of $m$ hence $M^0_{G}(L)$ is a finitely generated $R(L)$-module. 

On the other hand, if $F$ is a general fibre of $f$, then $g^*g_*F=F+E$ for some 
effective exceptional divisor $E$ so $E|_F=g^*g_*F|_F$ is a big divisor. The support 
of $L$ contains the support of $E$ hence $L|_F$ is also a big divisor.
Now put $\x{F}=\x{O}_F$ which is not reflexive nor torsion-free. Then, for each $m$ we have 
$$
H^0(X,\x{F}(mL))=H^0(F,\x{O}_F(mL))
$$
and its  dimension grows like $m^{n-1}$ as $L|_F$ is big. This implies that $M^0_{\x{F}}(L)$ 
is not a finitely generated $R(L)$-module. In fact, fix a positive integer $e$ and let 
$N$ be the submodule of $M^0_{\x{F}}(L)$ 
generated by the elements of degree $\le e$. Then, for any $m\gg 0$, the degree $m$  
homogeneous piece of $N$ is a $\C$-vector space of dimension at most 
$$
h^0(F,\x{O}_F)+h^0(F,\x{O}_F(L))+\dots+h^0(F,\x{O}_F(eL))
$$ 
which cannot grow like $m^{n-1}$.\\
\end{exa}

\begin{thm}\label{t-lcm-big-divisors}
Let $X$ be a projective scheme over a Noetherian ring $A$ and assume that 
$L$ is a Cartier divisor such that $R(L)$ is a finitely generated $A$-algebra. 
Moreover, suppose that $IL\sim E+G$ for some positive integer $I$ where $E$ is effective  
and $G$ is ample over $\Spec A$. Then, $M_{\x{F}}^p(L)$ is a finitely generated $R(L)$-module 
for every $p$ and every reflexive coherent sheaf $\x{F}$. If $X$ is integral, the same 
holds for any torsion-free coherent sheaf $\x{F}$.
\end{thm}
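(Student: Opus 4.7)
The plan is to deduce this theorem from Theorem \ref{t-lmc-ample-to-general} through two successive reductions. First, by part (2) of that theorem, finite generation of $M_{\x{F}}^p(L)$ over $R(L)$ follows once each $M_{\x{F}(iL)}^{q_i}(IL)$ is finitely generated over $R(IL)$ for $0 \le i < I$. Since tensoring with a line bundle preserves reflexivity (and torsion-freeness in the integral case), each $\x{F}(iL)$ inherits the hypothesis on $\x{F}$. Moreover, $R(IL)$ is itself a finitely generated $A$-algebra: if $R(L)$ is generated over $A$ by homogeneous elements $x_1,\dots,x_r$ of degrees $d_1,\dots,d_r$, then $R(IL)$ is generated by the finitely many monomials in the $x_i$ whose total degree is divisible by $I$ (Gordan's lemma applied to the monoid of exponent vectors). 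So we may replace $L$ by $IL$ and $\x{F}$ by $\x{F}(iL)$, reducing to the case $I=1$: we assume $L\sim E+G$ with $G$ ample over $\Spec A$ and $E$ effective.

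In this reduced case, part (1) of Theorem \ref{t-lmc-ample-to-general}, combined with Lemma \ref{r-change-p} to normalize $p$, reduces the problem to proving that $M_{\x{O}_X(l)}^0(L)$ is a finitely generated $R(L)$-module for every $l\gg 0$. This is where the ampleness of $G$ enters crucially. Using the canonical section $\sigma\in H^0(X,\x{O}_X(E))$ and the isomorphism $\x{O}_X(nL)\simeq \x{O}_X(nE+nG)$, one obtains for each $n$ a sheaf injection $\x{O}_X(l+nG)\hookrightarrow \x{O}_X(l+nL)$ with cokernel supported on the thickened divisor $nE$. Serre's theorem applied to the ample $G$ yields that $\bigoplus_{n\ge 0} H^0(X,\x{O}_X(l+nG))$ is a finitely generated $R(G)$-module for $l\gg 0$, giving abundant control over the ``ample base'' part of the module.

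The heart of the proof --- and the main obstacle --- is to promote this ample-base finite generation into finite generation of the full module $M_{\x{O}_X(l)}^0(L)$ over the larger (and a priori less positive) ring $R(L)$. The naive $R(G)$-submodule generated by $\sigma^n\cdot H^0(X,\x{O}_X(l+nG))$ is \emph{not} closed under the $R(L)$-action, because multiplying by a general section of $\x{O}_X(nL)$ need not land in the image of $\sigma^{n+m}$. Two natural strategies address this. One is to pass to a birational model: since $R(L)$ is finitely generated, $Y:=\Proj R(L)^{[d]}$ is a projective $A$-scheme with an ample sheaf, and on a resolution $X'\to X$ admitting a morphism to $Y$, the pullback of $L$ decomposes into a semi-ample part (pulled back from $Y$) plus a fixed part, after which Serre's theorem on $Y$ closes the argument. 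The other is to argue directly via the short exact sequences
$$
0\to \x{O}_X(l+nG)\to \x{O}_X(l+nL)\to \x{O}_{nE}(l+nL)\to 0,
$$
using a filtration of $\x{O}_{nE}$ to reduce the cokernel control to finite generation statements on $E$, and exploiting the Noetherianness of $R(L)$ to extract finitely many generators from these contributions. In either approach, the crux is that the assumed finite generation of $R(L)$ is what prevents the cokernel contributions from producing infinitely many new $R(L)$-generators.
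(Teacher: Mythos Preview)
Your initial reductions are reasonable, but you miss the key observation that makes the proof immediate, and your proposed completions are neither carried out nor needed.

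The point you overlook is this: in applying Theorem~\ref{t-lmc-ample-to-general}(1) you are free to \emph{choose} the very ample sheaf $\x{O}_X(1)$. Take it to be (a high multiple of) $G$ itself. Then for $l\gg 0$ the section of $lE$ gives an injection
\[
M_{lG}^p(L)\hookrightarrow M_{lE+lG}^p(L)\simeq M_{lIL}^p(L),
\]
and by Lemma~\ref{r-change-p} it suffices to see that $M_{lIL}^{-lI}(L)$ is finitely generated over $R(L)$. But this module is \emph{cyclic}: its degree-$m$ piece is $H^0(X,(m+lI)L)=R(L)_{m+lI}$, so every homogeneous element is $R(L)$ acting on the single element $1\in H^0(X,\x{O}_X)$ placed in degree $-lI$. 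That is the entire argument; no Proj construction, no filtration of $\x{O}_{nE}$, and no appeal to Serre's theorem for $G$ are required.

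By contrast, your two proposed strategies are only sketched. The Proj route would need you to say how an arbitrary reflexive $\x{F}$ transfers to $Y=\Proj R(L)^{[d]}$ and why the semi-ample plus fixed decomposition controls the module there; the exact-sequence route would need you to bound the cokernel contributions uniformly in $n$, which is precisely the difficulty you flag but do not resolve. Also, your first reduction to $I=1$ via part~(2) of Theorem~\ref{t-lmc-ample-to-general} is harmless but unnecessary: the paper simply uses $lIL$ directly, avoiding any appeal to finite generation of $R(IL)$.
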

\begin{proof}
We may assume that $G$ is very ample over $\Spec A$.
By Theorem \ref{t-lmc-ample-to-general} (1), it is enough to verify the finite generation 
of ${M}_{lG}^p(L)$ where $l\gg 0$. 
Since $lIL\sim lE+lG$ and $lE$ is effective, there is an injective map 
$$
{M}_{lG}^p(L)\to M_{lE+lG}^p(L)\simeq M_{lIL}^p(L)
$$ 
 By Lemma \ref{r-change-p}, it is 
enough to show that ${M}_{lIL}^{-lI}(L)$ is a finitely generated $R(L)$-module. 
Now the elements of degree $-lI$ are $H^0(X,-lIL+lIL)=H^0(X,\x{O}_X)$ which 
contains $1\in \x{O}_X(X)$. 
If $\alpha\in {M}_{lIL}^{-lI}(L)$ 
is a homogeneous element of degree $m\ge -lI$, that is, an element of $H^0(X,mL+lIL)$, then 
$\alpha=\alpha\cdot 1$ where we consider the second $\alpha$ as an element of 
$R(L)$ of degree $m+lI$ and we consider $1$ as an element of  ${M}_{lIL}^{-lI}(L)$ 
of degree $-lI$. So,  ${M}_{lIL}^{-lI}(L)$ is generated over $R(L)$ by the element 
$1$ of degree $-lI$.
\end{proof}

\vspace*{0.3cm}

\section{Inductive finite generation of divisorial algebras and modules}

 Let $X$ be a projective scheme over a Noetherian ring 
$A$, and let $L$ be a Cartier divisor on $X$.
One of the main ideas that one might use to prove that $R(L)$ is a finitely generated $A$-algebra, 
is by induction on dimension, that is, restriction to subschemes.
For each closed subscheme $S$ of $X$ we have the exact sequence 
$$
0\to \x{I}_S(mL) \to \x{O}_X(mL) \to \x{O}_S(mL) \to 0
$$
which gives the exact sequence 
$$
0\to \bigoplus_{m\ge 0} H^0(X,\x{I}_S(mL)) \to R(L)=\bigoplus_{m\ge 0} H^0(X,\x{O}_X(mL)) 
\xrightarrow{\phi} \bigoplus_{m\ge 0} H^0(S,\x{O}_S(mL))
$$
We can try to use such sequences for the induction process. However, we face two main issues here: 
we need the finite generation of the image of $\phi$ and we need to relate the kernel of $\phi$ 
to the finite generation of $R(L)$. If we denote the image of $\phi$ by 
$R(L)|_S$, then we have the exact sequence 
$$
0\to M^0_{\x{I}_S}(L) \to R(L) \to R(L)|_S \to 0 
$$
In this section, we show that the theory of divisorial modules provides a convenient 
way for dealing with the kernel issue. The main idea is to consider $M^0_{\x{I}_S}(L)$ 
as an $R(L)$-module. Assume that $M^0_{\x{I}_S}(L)$ has no non-trivial elements of degree zero, 
i.e. $H^0(X,\x{I}_S)=0$. Then, $R(L)$ is a finitely generated $A$-algebra 
iff $R(L)|_S$ is a finitely generated $A$-algebra and $M^0_{\x{I}_S}(L)$ is a finitely 
generated $R(L)$-module (see Lemma \ref{l-modules-to-algebras}).
Needless to say, if we take $S$ to be arbitrary we should not expect to get much 
information from the above sequences. Instead, one has to choose the $S$ carefully.

Now let $\x{F}$ be an $\x{O}_X$-module. The exact sequence 
$$
 \x{F}\otimes \x{I}_S(mL) \to \x{F}\otimes\x{O}_X(mL) \to \x{F}\otimes\x{O}_S(mL) \to 0
$$
induces a map 
$$
H^0(X,\x{F}\otimes\x{O}_X(mL)) \to H^0(X,\x{F}\otimes\x{O}_S(mL))
$$
hence a map $M^p_{\x{F}}(L)\to M^p_{\x{F}\otimes\x{O}_S}(L)$ of $R(L)$-modules. 
We denote the image of the latter 
map by $M^p_{\x{F}}(L)|_S$ and call it the restriction of $M^p_{\x{F}}(L)$ to $S$. 
Note that $M^p_{\x{F}}(L)|_S$ is a module over $R(L)|_S$. If $\x{F}$ is locally free, then 
we have an exact sequence 
$$
0\to M^p_{\x{F}\otimes \x{I}_S}(L)\to M^p_{\x{F}}(L)\to M^p_{\x{F}}(L)|_S \to 0
$$
Now assume in addition that $S=L$ is an effective Cartier divisor.
Then, $\x{O}_X(-L)\simeq \x{I}_L$ hence there is an injective 
morphism  $\x{O}_X(-L)\to \x{O}_X$ whose image is $\x{I}_L$. We then get an injective morphism 
$\x{O}_X\to \x{O}_X(L)$.
 Let $\alpha\in H^0(X,L)$ be the image of $1\in H^0(X,\x{O}_X)$.  
Then, $\alpha$ is a homogeneous element of $R(L)$ of degree one, and in the above 
exact sequence we can replace $M^p_{\x{F}\otimes \x{I}_S}(L)\to M^p_{\x{F}}(L)$ with 
the  map $ M^p_{\x{F}(-L)}(L)\to M^p_{\x{F}}(L)$ which 
is given by multiplication with $\alpha$.\\

\begin{lem}\label{l-modules-to-algebras}
Let $A$ be a Noetherian ring and let 
$$
\phi\colon R=\bigoplus_{m\ge 0}R_m\to T=\bigoplus_{m\ge 0}T_m
$$ 
be a surjective graded homomorphism of graded $A$-algebras. Assume further that $R_0\to T_0$ is 
an isomorphism. Let $K=\ker \phi$.
Then, $R$ is a finitely generated $A$-algebra iff $T$ is a finitely generated $A$-algebra 
and $K$ is a finitely generated $R$-module.  
\end{lem}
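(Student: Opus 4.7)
The plan is to handle the two directions separately. For the forward direction, assume $R$ is a finitely generated $A$-algebra. Then $T \cong R/K$ is immediately finitely generated over $A$. Moreover, $R$ is Noetherian by the Hilbert basis theorem (as a quotient of a polynomial ring over the Noetherian ring $A$), so the ideal $K$ is finitely generated as an $R$-module. This direction is essentially formal.

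For the reverse direction, assume $T$ is a finitely generated $A$-algebra and $K$ is a finitely generated $R$-module. I would first observe that $R_0$ itself is finitely generated as an $A$-algebra: any finite set of homogeneous generators of $T$ splits into degree-zero generators and positive-degree generators, and since the positive-degree generators contribute nothing to $T_0$, the degree-zero part $T_0$ is generated over $A$ by the degree-zero elements alone. Transferring along the isomorphism $R_0 \cong T_0$ gives finite generation of $R_0$ over $A$.

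Next I would choose homogeneous generators $t_1,\dots,t_n$ of $T$ over $A$, pick homogeneous lifts $r_i \in R$ via the surjection $\phi$, pick homogeneous generators $k_1,\dots,k_m$ of $K$ as an $R$-module, and pick finitely many $A$-algebra generators of $R_0$. Let $S \subseteq R$ be the $A$-subalgebra they generate. The goal is to prove $S = R$ by induction on the degree $d$ of a homogeneous element $r \in R_d$. Given such an $r$, write $\phi(r) \in T_d$ as an $A$-polynomial in the $t_i$, and lift this polynomial termwise to an element $P \in S \cap R_d$. Then $\phi(r - P) = 0$, so $r - P \in K_d$, and using the $R$-module generators we can write $r - P = \sum_j b_j k_j$ with each $b_j$ homogeneous of degree $d - \deg k_j$.

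The heart of the argument, and the only point where anything could go wrong, is making the induction genuinely descend in degree. This is exactly what the hypothesis $R_0 \xrightarrow{\sim} T_0$ buys us: it forces $K_0 = 0$, so every homogeneous generator $k_j$ has $\deg k_j > 0$, and consequently $\deg b_j < d$. The inductive hypothesis then places each $b_j$ in $S$, whence $r = P + \sum_j b_j k_j \in S$. Without the degree-zero isomorphism hypothesis, generators of $K$ could live in degree $0$ and the induction would stall tautologically; with it, the induction closes cleanly and $R = S$ is finitely generated over $A$.
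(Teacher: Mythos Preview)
Your proof is correct and follows essentially the same approach as the paper's: both directions are handled identically, and for the reverse direction both arguments lift generators of $T$, take homogeneous generators of $K$, observe that $K_0=0$ forces those generators to have strictly positive degree, and then run an induction on degree. The only cosmetic difference is that the paper reduces to the case $A=R_0$ while you instead throw finitely many $A$-algebra generators of $R_0$ into your generating set $S$; these are equivalent ways of anchoring the degree-zero base case.
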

\begin{proof}
If $R$ is a finitely generated $A$-algebra, then the claim is obvious as $R$ would be 
Noetherian. Conversely, assume that $T$ is a finitely generated $A$-algebra 
and $K=\bigoplus_{m\ge 0}K_m$ is a finitely generated $R$-module. We may assume that $K\neq 0$.
Let $\alpha_1, \dots, \alpha_r$ be homogensous elements 
of $R$ mapping to a set of generators of $T$ over $A$, and let $\beta_1, \dots,\beta_s$ 
be a set of non-zero homogeneous generators of $K$ over $R$. Since $R_0\to T_0$ is an isomorphism, 
$K_0=0$, hence $\deg \beta_i>0$ for every $i$. Moreover, $R_0\simeq T_0$ is a finitely generated 
$A$-module hence we could assume that $A=R_0$.

We will prove that $R$ is generated as an $A$-algebra by the elements $\alpha_i$ and $\beta_j$.
We prove the claim by induction on degree. Pick $\gamma\in R$ homogeneous 
of degree $m$ and assume that the claim is true for elements of degree $<m$. 
Then, $\gamma=\gamma_1+\gamma_2$ where $\gamma_i$ are  homogeneous of degree $m$, 
$\gamma_1$ is a polynomial over $A$ in the 
$\alpha_i$, and $\gamma_2\in K$. We can write 
$\gamma_2=\sum a_i\beta_i$ where $a_i\in R$ are homogeneous and $\deg a_i\beta_i=m$ for each $i$. 
 In particular,  $\deg a_i<m$ because $\deg \beta_i>0$. By induction, each $a_i$ 
 is a polynomial over $A$ in the $\alpha_i$ and $\beta_j$. Therefore, the same holds for 
 $\gamma$.\\ 
\end{proof}

\begin{lem}\label{l-fg-exact-sequence}
Let $R$ be a ring and $0\to M'\to M \to M''\to 0$ an exact sequence of $R$-modules.
If $M'$ and $M''$ are finitely generated over $R$, then $M$ is also finitely generated 
over $R$.  
\end{lem}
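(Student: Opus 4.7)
The plan is to use the standard two-step lifting argument. I would fix a finite set of generators $x_1, \dots, x_n$ of $M'$ over $R$ and a finite set of generators $y_1, \dots, y_m$ of $M''$ over $R$, and show that $M$ is generated by (the images of) the $x_i$ together with chosen preimages of the $y_j$.

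First I would identify $M'$ with its image in $M$ under the injection $M' \hookrightarrow M$, and for each generator $y_j \in M''$ I would pick some $\tilde{y}_j \in M$ mapping to $y_j$ under the surjection $M \twoheadrightarrow M''$; such lifts exist precisely because $M \to M''$ is surjective. Next, given an arbitrary $z \in M$, its image $\bar z \in M''$ can be written as $\bar z = \sum_{j} r_j y_j$ for some $r_j \in R$. The element $z - \sum_{j} r_j \tilde{y}_j$ then maps to $0$ in $M''$, so by exactness it lies in $M' \subseteq M$ and can be written as $\sum_{i} s_i x_i$ for some $s_i \in R$. This gives
\[
z \;=\; \sum_{i} s_i x_i \;+\; \sum_{j} r_j \tilde{y}_j,
\]
exhibiting $\{x_1, \dots, x_n, \tilde{y}_1, \dots, \tilde{y}_m\}$ as a finite generating set for $M$ over $R$.

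There is essentially no hard step here; the only thing to be careful about is the order of operations (first subtract off the $\tilde{y}_j$ piece to land in the submodule $M'$, then expand in the $x_i$), and the fact that the lifts $\tilde{y}_j$ need not be canonical but any choice works. The argument is purely formal and uses nothing about $R$ beyond the definition of finite generation, so no Noetherian hypothesis or grading is needed.
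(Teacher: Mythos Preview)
Your proof is correct and is essentially identical to the paper's own argument: the paper also identifies $M'$ with a submodule of $M$, chooses lifts in $M$ of a finite generating set of $M''$, and then writes an arbitrary element of $M$ as a sum of a combination of these lifts and an element of $M'$.
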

\begin{proof}
We can think of $M'$ as a submodule of $M$. 
Let $\alpha_1,\dots,\alpha_r$ be elements of $M$ whose images generate $M''$ over $R$, 
and let $\beta_1,\dots,\beta_t$ be generators of $M'$. Each $\gamma\in M$ can written 
as $\gamma=\gamma_1+\gamma_2$ where $\gamma_2$ belongs to the submodule generated 
by the $\alpha_i$ and $\gamma_2\in M'$. But then $\gamma$ belongs to the submodule 
generated by the $\alpha_i$ and $\beta_j$.\\ 
\end{proof}

\begin{thm}\label{t-dm-1}
Let $X$ be a projective scheme over a Noetherian ring $A$ and $L=\sum l_iL_i$ a Cartier divisor
where $l_i> 0$ are integers and $L_i\neq 0$ are effetive Cartier divisors. 
Let $\x{F}$ be a locally free coherent sheaf on $X$. Assume that, for each $S=L_j$, the restriction 
$M^p_{\x{F}(-C)}(L)|_S$ is a finitely generated $R(L)|_S$-module for any 
$C=\sum c_iL_i$ where $c_i\in[0,l_i]$ are integers and $c_j<l_j$. Then, $M^p_{\x{F}}(L)$ is a 
finitely generated $R(L)$-module.
\end{thm}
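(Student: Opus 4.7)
The plan is to iterate the short exact sequence of $R(L)$-modules
\[
0 \to M^p_{\x{F}(-C-L_j)}(L) \xrightarrow{\,\cdot\sigma_j\,} M^p_{\x{F}(-C)}(L) \to M^p_{\x{F}(-C)}(L)|_{L_j} \to 0,
\]
obtained by tensoring $0\to\x{O}_X(-L_j)\to\x{O}_X\to\x{O}_{L_j}\to 0$ with $\x{F}(-C)\otimes\x{O}_X(mL)$ (exact since $\x{F}$ is locally free) and taking global sections, where $\sigma_j\in H^0(X,L_j)$ is the canonical section. For any effective $C=\sum c_iL_i$ with $c_i\in[0,l_i]$, the canonical section $\sigma_C$ of $\x{O}_X(C)$ gives a degree-zero $R(L)$-linear injection $M^p_{\x{F}(-C)}(L)\hookrightarrow M^p_\x{F}(L)$, and the compatibility $\sigma_{C+L_j}=\sigma_C\sigma_j$ makes the iteration well-behaved.

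I would then descend along a path $C_0=0, C_1,\dots,C_\ell=L$ with $C_{k+1}=C_k+L_{j_k}$, each $j_k$ chosen so that the $j_k$-th coefficient of $C_k$ is strictly less than $l_{j_k}$; such a path of length $\ell=\sum l_i$ exists. The hypothesis provides, at each step, finitely many homogeneous generators of $M^p_{\x{F}(-C_k)}(L)|_{L_{j_k}}$ over $R(L)|_{L_{j_k}}$; I lift them (using surjectivity of $R(L)\to R(L)|_{L_{j_k}}$) to a finite set $V_k\subset M^p_{\x{F}(-C_k)}(L)$ of uniformly bounded degree $\le N$. Substituting the exact sequences into one another and pushing the result forward into $M^p_\x{F}(L)$ via $\sigma_{C_k}$, I would derive the identity of $R(L)$-submodules
\[
M^p_\x{F}(L) = R(L)\cdot W + \alpha\cdot M^p_{\x{F}(-L)}(L),
\]
where $W=\bigcup_k \sigma_{C_k}V_k\subset M^p_\x{F}(L)$ is a finite homogeneous set of degrees in $[p,N]$, and $\alpha:=\sigma_L\in R(L)_1$ is the canonical section of $L$.

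To close the argument I would absorb the residual $\alpha\cdot M^p_{\x{F}(-L)}(L)\subseteq M^p_\x{F}(L)$, whose degree-$m$ part is $\sigma_L\cdot H^0(X,\x{F}((m-1)L))\subseteq H^0(X,\x{F}(mL))$. For $m\ge p+1$ this coincides with $\sigma_L\cdot M^p_\x{F}(L)_{m-1}\subseteq R(L)\cdot M^p_\x{F}(L)$ (viewing $\sigma_L$ as an element of $R(L)_1$), while for $m=p$ it is the finitely generated $R_0$-module $\sigma_L\cdot H^0(X,\x{F}((p-1)L))\subset M^p_\x{F}(L)_p$. Let $V$ be the $R_0$-submodule of $M^p_\x{F}(L)$ generated by $W$ together with this degree-$p$ piece; it is finitely generated over $R_0$. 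An induction on $m$ then shows $M^p_\x{F}(L)_m\subseteq R(L)\cdot V$: at $m=p$, $M^p_\x{F}(L)_p=R_0\cdot V_p$; for $m\ge p+1$, $M^p_\x{F}(L)_m\subseteq (R(L)W)_m+\sigma_L\cdot M^p_\x{F}(L)_{m-1}\subseteq R(L)\cdot V$ by the inductive hypothesis. Hence $M^p_\x{F}(L)=R(L)\cdot V$ is finitely generated.

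The main obstacle is the apparent circularity of the descent: the iteration reduces $M^p_\x{F}(L)$ to $M^p_{\x{F}(-L)}(L)$, which up to a natural shift of grading is the larger module $M^{p-1}_\x{F}(L)$, threatening an infinite regress on $p$. The resolution is that multiplication by $\alpha$ identifies the part of $M^p_{\x{F}(-L)}(L)$ in degrees $\ge p+1$ with a shift of $M^p_\x{F}(L)$ itself, so the residual is absorbed by the $R(L)$-action modulo only a finitely generated $R_0$-piece in degree $p$; this is what makes the closing induction on degree terminate.
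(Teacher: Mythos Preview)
Your proposal is correct and follows essentially the same approach as the paper: iterate the restriction exact sequences along a chain $0=C_0\subset C_1\subset\cdots\subset C_\ell=L$ to reduce to the residual $M^p_{\x{F}(-L)}(L)$, then absorb that residual via multiplication by the canonical section $\alpha\in R(L)_1$ using induction on degree. The paper phrases the first part abstractly, using Lemma~\ref{l-fg-exact-sequence} on successive quotients to conclude that $M^p_{\x{F}}(L)|_L$ is finitely generated and then lifting generators, whereas you carry the generators explicitly via the sections $\sigma_{C_k}$; you are also more careful than the paper about the boundary case $m=p$, which the paper absorbs into the vague phrase ``$\omega_2$ has bounded degree.''
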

\begin{proof}
Let $C=\sum c_iL_i$ where $0\le c_i\le l_i$ are integers. 
Assume that in the exact sequence 
$$
0\to M^p_{\x{F}(-C)}(L) \to M^p_{\x{F}}(L) \to M^p_{\x{F}}(L)|_C \to 0
$$
the module $M^p_{\x{F}}(L)|_C$ is a finitely generated $R(L)|_C$-module. 
The surjection $R(L)\to R(L)|_C$ makes ${M^p_{\x{F}}(L)}|_C$ into a finitely 
generated $R(L)$-module.
We increase $C$ inductively so we could have started with $C=0$ at the first step.
Assume that $C\neq L$ and let $S=L_j$ such that $c_j< l_j$.
We have an exact sequence 
$$
0\to M^p_{\x{F}(-C-S)}(L) \to M^p_{\x{F}(-C)}(L) \to M^p_{\x{F}(-C)}(L)|_S \to 0
$$
where by assumptions $M^p_{\x{F}(-C)}(L)|_S$ is a finitely generated $R(L)|_S$-module 
hence a finitely generated $R(L)$-module. 
By induction and by Lemma \ref{l-fg-exact-sequence}, the module in the middle in the 
exact sequence 
$$
0\to  \frac{M^p_{\x{F}(-C)}(L)}{M^p_{\x{F}(-C-S)}(L)} \to \frac{M^p_{\x{F}}(L)}{M^p_{\x{F}(-C-S)}(L)}  
 \to \frac{M^p_{\x{F}}(L)}{M^p_{\x{F}(-C)}(L)}    \to 0
$$
is finitely generated over $R(L)$ which in turn 
 implies that in the exact sequence 
$$
0\to M^p_{\x{F}(-C-S)}(L) \to M^p_{\x{F}}(L) \to M^p_{\x{F}}(L)|_{C+S} \to 0
$$
the module $M^p_{\x{F}}(L)|_{C+S}$ is a finitely generated $R(L)$-module.
 By continuing this process we reach the 
situation $C+S=L$. In particular, we deduce that in the exact sequence  
$$
0\to {M^p_{\x{F}(-L)}(L)}\to {M^p_{\x{F}}(L)} \to {M^p_{\x{F}}(L)}|_L\to 0
$$
the module ${M^p_{\x{F}}(L)}|_L$ is a finitely generated $R(L)|_L$-module hence a 
finitely generated $R(L)$-module. 
 Suppose that it is generated by the images of homogeneous 
elements $\gamma_1,\dots,\gamma_r\in M^p_{\x{F}}(L)$. Let $N$ be the submodule of ${M^p_{\x{F}}(L)}$
generated by these elements. 

Pick a homogeneous element 
$\beta\in {M^p_{\x{F}}(L)}$ of degree $m$. Then, we can write $\beta=\beta_1+\beta_2$ 
where $\beta_1$ is homogeneous of degree $m$ in $N$, and $\beta_2$ 
is in the image of the  map $ M^p_{\x{F}(-L)}(L)\to M^p_{\x{F}}(L)$. But this map is 
nothing but multiplication with $\alpha$ for some $\alpha\in H^0(X,L)$. 
Thus, $\beta_2=\alpha\cdot \beta_3$ where $\beta_3$ is a homogeneous element of $ M^p_{\x{F}(-L)}(L)$
of degree $m$, that is, 
$$
\beta_2\in H^0(X,\x{F}(-L)(mL))\simeq H^0(X,\x{F}((m-1)L))
$$ 
Since the degree $m-1$ 
elements of $M^p_{\x{F}}(L)$ are just $H^0(X,\x{F}((m-1)L))$, we can consider 
$\beta_3$ as an element of $ M^p_{\x{F}}(L)$ of degree $m-1$.
We can repeat this process with $\beta_3$ in place of $\beta$ 
and at the end write  $\beta$ as $\omega_1+\omega_2$ 
where $\omega_1\in N$ and $\omega_2$ has 
bounded degree. This implies the claimed finite generation.\\
\end{proof}

\begin{thm}\label{c-dm-2}
Let $X$ be a projective scheme over a Noetherian ring $A$ and $L=\sum l_iL_i$ a Cartier divisor
where $l_i> 0$ are integers and $L_i\neq 0$ are effective Cartier 
divisors. Assume further that $H^0(X,-L_1)=0$. 
Then, the following are equivalent:

$(1)$ $R(L)$ is a finitely generated $A$-algebra;

$(2)$ $R(L)|_{L_1}$  is a finitely generated $A$-algebra, and for each $S=L_j$, 
 the restriction $M^0_{-C}(L)|_S$ is a finitely generated $R(L)|_S$-module for any 
 $C=\sum c_iL_i$ where $c_i\in [0, l_i]$ are integers and $c_j<l_j$. 
\end{thm}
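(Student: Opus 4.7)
The plan is to prove the two implications separately, handling (1)$\Rightarrow$(2) by a direct Noetherianity argument and (2)$\Rightarrow$(1) by applying Lemma~\ref{l-modules-to-algebras} to $\phi\colon R(L)\twoheadrightarrow R(L)|_{L_1}$ after establishing that the kernel $M^0_{-L_1}(L)$ is a finitely generated $R(L)$-module.

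For (1)$\Rightarrow$(2): the ring $R(L)$ is Noetherian by the Hilbert basis theorem, so its quotient $R(L)|_{L_1}$ is a finitely generated $A$-algebra. For any effective $C=\sum c_iL_i$ with $c_i\in[0,l_i]$, the ideal sheaf inclusion $\x{O}_X(-C)\hookrightarrow\x{O}_X$ induces, after twisting by $\x{O}_X(mL)$ and taking global sections, an inclusion $M^0_{-C}(L)\hookrightarrow R(L)$ of graded $R(L)$-modules; thus $M^0_{-C}(L)$ is finitely generated as a submodule of a Noetherian module, and the quotient $M^0_{-C}(L)|_S$ is finitely generated over $R(L)$, hence over $R(L)|_S$ via the surjection $R(L)\twoheadrightarrow R(L)|_S$.

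For (2)$\Rightarrow$(1): the hypothesis $H^0(X,-L_1)=0$ both identifies $\ker\phi$ with $M^0_{-L_1}(L)$ and makes $R(L)_0\to(R(L)|_{L_1})_0$ an isomorphism, so by Lemma~\ref{l-modules-to-algebras} it suffices to prove that $M^0_{-L_1}(L)$ is a finitely generated $R(L)$-module. I will in fact show, by induction on $\sum_i(l_i-C_i)$, that $M^0_{-C}(L)$ is finitely generated over $R(L)$ for every effective $C=\sum C_iL_i$ with $C_i\in[0,l_i]$, building up from the base case $C=L$ by subtracting components $L_j$ one at a time. For the base case $C=L$, the canonical section $\alpha\in H^0(X,L)$ coming from $\x{O}_X\hookrightarrow\x{O}_X(L)$ identifies $M^0_{-L}(L)$ with the principal ideal $\alpha R(L)\subset R(L)$, which is cyclic. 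For the inductive step, pick any $j$ with $C_j\ge 1$ and tensor $0\to\x{O}_X(-L_j)\to\x{O}_X\to\x{O}_{L_j}\to 0$ with $\x{O}_X(mL-(C-L_j))$; summing over $m\ge 0$ after taking $H^0$ yields the exact sequence
$$
0\to M^0_{-C}(L)\to M^0_{-(C-L_j)}(L)\to M^0_{-(C-L_j)}(L)|_{L_j}\to 0.
$$
Since the coefficients of $C-L_j$ all lie in $[0,l_i]$ and its $j$-th coefficient equals $C_j-1<l_j$, hypothesis (2) gives finite generation of $M^0_{-(C-L_j)}(L)|_{L_j}$ over $R(L)|_{L_j}$, hence over $R(L)$. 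By Lemma~\ref{l-fg-exact-sequence}, $M^0_{-(C-L_j)}(L)$ is then finitely generated. Iterating in any order that takes $C=L$ down to $C=L_1$ (for instance removing $L_n$ a total of $l_n$ times, then $L_{n-1}$ a total of $l_{n-1}$ times, and so on, ending with $L_1$ removed $l_1-1$ times) finishes the argument.

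I expect the main obstacle to be setting up the right induction: the base case must be $C=L$, where $M^0_{-L}(L)$ becomes principal through the canonical section of $\x{O}_X(L)$, and the subtractions must be arranged so that the $c_j<l_j$ condition of (2) is automatically satisfied at each step. Once one observes that removing a single $L_j$ from any $C$ with $C_j\ge 1$ forces $(C-L_j)_j=C_j-1<l_j$, the induction runs without further complication, and the asymmetric role of $L_1$ in the statement is forced by the asymmetry in Lemma~\ref{l-modules-to-algebras}, which requires $H^0(X,-L_1)=0$.
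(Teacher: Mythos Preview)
Your proof is correct and follows essentially the same route as the paper's: both directions reduce via Lemma~\ref{l-modules-to-algebras} to showing that $M^0_{-L_1}(L)$ is finitely generated, and both walk a chain of short exact sequences linking $M^0_{-L_1}(L)$ to the cyclic module $M^0_{-L}(L)$, peeling off one $L_j$ at a time and invoking hypothesis~(2) on the restricted quotient. The only cosmetic difference is that the paper runs the chain from $C=L_1$ up to $C=L$ as a sequence of ``iff'' statements, whereas you run it as an induction from $C=L$ down to $C=L_1$ and invoke Lemma~\ref{l-fg-exact-sequence} explicitly at each step; the content is the same.
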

\begin{proof}
(1) $\implies$ (2): Since $R(L)$ is a finitely generated $A$-algebra, it is a Noetherian 
ring. Obviously, each restriction $R(L)|_S$ is a finitely generated $A$-algebra. On the 
other hand, for each $C$ as in the theorem, 
$M^0_{-C}(L)$ is isomorphic to an ideal of $R(L)$ hence it is a finitely 
generated $R(L)$-module which in turn implies that $M^0_{-C}(L)|_S$ is a finitely generated 
$R(L)|_S$-module.

(2) $\implies$ (1):  Let $S=L_1$ and consider the exact sequence 
$$
0\to M^0_{-S}(L) \to R(L) \to R(L)|_S \to 0
$$  
Since $H^0(X,-L_1)=0$, $H^0(X,0)\to H^0(S,0)$ is injective hence $R(L) \to R(L)|_S$ 
is an isomorphism in degree zero. 
Thus, by Lemma \ref{l-modules-to-algebras}, $R(L)$ is a finitely generated $A$-algebra 
iff $M^0_{-S}(L)$ is a finitely generated $R(L)$-module because we have assumed that 
$R(L)|_S$ is a finitely generated $A$-algebra. For the rest we argue similar to the 
proof of Theorem \ref{t-dm-1}. 

Let $C=\sum c_iL_i$ where $0\le c_i\le l_i$ are integers. 
Assume that we have already proved that  
$M^0_{-L_1}(L)$ is a finitely generated $R(L)$-module iff $M^0_{-C}(L)$ is a 
finitely generated $R(L)$-module.
We increase $C$ inductively so we could have started with $C=L_1$ at the first step.
Assume that $C\neq L$ and let $S=L_j$ such that $c_j< l_j$.
We have an exact sequence 
$$
0\to M^0_{-C-S}(L) \to M^0_{-C}(L) \to M^0_{-C}(L)|_S \to 0
$$
where by assumptions $M^0_{-C}(L)|_S$ is a finitely generated $R(L)|_S$-module hence a 
finitely generated $R(L)$-module. 
Thus, $M^0_{-L_1}(L)$ is a finitely generated $R(L)$-module  iff $M^0_{-C}(L)$ is a 
finitely generated $R(L)$-module iff $M^0_{-C-S}(L)$ is a 
finitely generated $R(L)$-module. We continue this until we reach the situation 
$L=C+S$. In that case, $M^0_{-L_1}(L)$ is a finitely generated $R(L)$-module iff 
$M^0_{-L}(L)$ is a finitely generated $R(L)$-module. But $M^0_{-L}(L)$ is generated 
by the element $1\in M^0_{-L}(L)$ of degree one.\\
\end{proof}

\begin{thm}\label{t-dm-3}
Let $X$ be a projective scheme over a Noetherian ring $A$ and $L=\sum l_iL_i$ a Cartier divisor
where $l_i> 0$ are integers and $L_i\neq 0$ are effective Cartier divisors. Assume  
that $H^0(X,-L_1)=0$. Fix an invertible sheaf $\x{O}_X(1)$ very ample over $\Spec A$. 
Then, the following are equivalent:

$(1)$ $R(L)$ is a finitely generated $A$-algebra and $M^p_{\x{F}}(L)$ is a finitely generated 
$R(L)$-module for any reflexive coherent sheaf $\x{F}$ and any $p$;

$(2)$ for each $S=L_j$, the restriction $R(L)|_{S}$ is a finitely generated $A$-algebra, and  
 the restriction $M^0_{\x{O}_X(l)}(L)|_S$ is a finitely generated $R(L)|_S$-module 
for any $l\gg 0$. 
\end{thm}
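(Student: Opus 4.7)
Plan. The implication $(1)\Rightarrow(2)$ is routine and comes from taking quotients: if $R(L)$ is a finitely generated $A$-algebra, then each $R(L)|_S$ is a finitely generated $A$-algebra; and $M^0_{\x{O}_X(l)}(L)$ is finitely generated over $R(L)$ by (1) (since $\x{O}_X(l)$ is reflexive), so its image $M^0_{\x{O}_X(l)}(L)|_S$ is finitely generated over $R(L)|_S$. The content is in the converse direction, which I would prove by assembling Theorem \ref{c-dm-2}, Theorem \ref{t-dm-1}, Theorem \ref{t-lmc-ample-to-general}(1), and Lemma \ref{r-change-p}.

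Stage A: $R(L)$ is a finitely generated $A$-algebra. I would invoke Theorem \ref{c-dm-2}; its algebra hypothesis is supplied by (2), so the task reduces to showing that for each $S=L_j$ and each $C=\sum c_iL_i$ with $c_i\in[0,l_i]$ and $c_j<l_j$, the restriction $M^0_{-C}(L)|_S$ is a finitely generated $R(L)|_S$-module. Because $C$ is Cartier and $\x{O}_X(1)$ is very ample, for some $l\gg 0$ I pick finitely many sections $s_1,\dots,s_k\in H^0(X,\x{O}_X(l+C))$ whose restrictions to $S$ do not simultaneously vanish at any associated point of $S$. These yield a sheaf map $\x{O}_X(-C)\to \x{O}_X(l)^{\oplus k}$ that is injective on $X$ and whose tensor with $\x{O}_S$ is still injective. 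Twisting by $mL$, taking $H^0$, and summing produces an injection of graded $R(L)$-modules
$$
M^0_{-C}(L)|_S \hookrightarrow \bigl(M^0_{\x{O}_X(l)}(L)|_S\bigr)^{\oplus k}.
$$
The right-hand side is finitely generated over the Noetherian ring $R(L)|_S$ by (2), so the left-hand side is finitely generated, and Theorem \ref{c-dm-2} delivers the algebra claim.

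Stage B: the module claim in (1). By Lemma \ref{r-change-p} and Theorem \ref{t-lmc-ample-to-general}(1), which is now applicable because $R(L)$ is finitely generated, it suffices to show $M^0_{\x{O}_X(l)}(L)$ is a finitely generated $R(L)$-module for every $l\gg 0$. I apply Theorem \ref{t-dm-1} with the locally free sheaf $\x{F}=\x{O}_X(l)$; its hypothesis asks for $M^0_{\x{O}_X(l-C)}(L)|_S$ to be a finitely generated $R(L)|_S$-module for each $S=L_j$ and admissible $C$. Repeating the section-injection construction of Stage A with a possibly larger $l'$ in place of $l$ yields $\x{O}_X(l-C)\hookrightarrow \x{O}_X(l')^{\oplus k}$ staying injective on $S$, so $M^0_{\x{O}_X(l-C)}(L)|_S$ embeds into $\bigl(M^0_{\x{O}_X(l')}(L)|_S\bigr)^{\oplus k}$, which is Noetherian by (2).

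The main obstacle is the non-exactness of the restriction functor $(\cdot)|_S$: one cannot lift module injections to the restrictions directly. The fix is to engineer sheaf-level injections whose restrictions to $S$ are still injective, by choosing global sections avoiding the finitely many associated points of $S$; this is available for $l$ sufficiently large by very ampleness of $\x{O}_X(1)$ and also handles the schematic, possibly non-integral setting of the theorem.
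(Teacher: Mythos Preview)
Your overall architecture matches the paper's exactly: $(1)\Rightarrow(2)$ is immediate, and for $(2)\Rightarrow(1)$ you feed Theorem~\ref{c-dm-2} and Theorem~\ref{t-dm-1} the needed restricted finite generation, then conclude via Theorem~\ref{t-lmc-ample-to-general}(1) and Lemma~\ref{r-change-p}. The one substantive difference is how you manufacture an injection that survives restriction to $S$. You pick sections of $\x{O}_X(l+C)$ missing the associated points of $S$ to get $\x{O}_X(-C)\hookrightarrow\x{O}_X(l)^{\oplus k}$ whose tensor with $\x{O}_S$ stays injective; this is correct (a nonzero kernel in $\x{O}_S$ would have an associated point, which is then associated in $\x{O}_S$, contradicting the non-vanishing there), but it is tailored to line bundles and has to be rerun for each $C$. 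The paper instead handles every locally free $\x{P}$ at once: dualising a presentation $\bigoplus\x{O}_X(-l_i)\twoheadrightarrow\x{P}^{\vee}$ yields a short exact sequence $0\to\x{P}\to\x{E}_1\to\x{E}_2\to 0$ of locally free sheaves, which is locally split and therefore stays exact after tensoring with any $\x{G}$, in particular with $\x{O}_S$. This gives $M^0_{\x{P}}(L)|_S\hookrightarrow M^0_{\x{E}_1}(L)|_S$ for free, covering $\x{P}=\x{O}_X(-C)$ and $\x{P}=\x{O}_X(l-C)$ simultaneously without any appeal to associated points. Your argument is sound; the paper's is slightly slicker and more uniform.
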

\begin{proof}
(1) $\implies$ (2): Obvious. We prove the converse. Let $\x{P}$ be any coherent locally free 
sheaf on $X$. There is a surjective morphism 
$\pi\colon \bigoplus_{1}^n \x{O}_X(-l_i) \to \x{P}^{\vee}$ 
where $l_i\gg 0$. Put $\x{E}_1=\bigoplus_{1}^n \x{O}_X(l_i)$. The kernel of $\pi$ is 
also locally free as $\x{P}^{\vee}$ and $\x{E}_1^{\vee}$ are both locally free.  
Assume that the kernel is $\x{E}_2^\vee$. Then, by dualising we get an exact 
sequence 
$$
0\to \x{P}\to \x{E}_1 \to \x{E}_2 \to 0
$$
 If $\x{G}$ is any coherent sheaf on 
$X$, then the sequence 
$$
0\to \x{P} \otimes_{\x{O}_X} \x{G}\to \x{E}_1\otimes_{\x{O}_X} \x{G} \to \x{E}_2\otimes_{\x{O}_X} \x{G} \to 0
$$
is again exact (cf. [\ref{B-topics}, Theorem 3.1.8]).

Now let $S=L_j$ for some $j$. Then, the horizontal arrows in the commutative diagram
$$
\xymatrix{
\x{P}(mL) \ar[d] \ar[r] & \x{E}_1(mL)\ar[d] \\
\x{P}(mL) \otimes_{\x{O}_X} \x{O}_S \ar[r] & \x{E}_1(mL)\otimes_{\x{O}_X} \x{O}_S
}
$$ 
are injective hence $M^0_{\x{P}}(L)|_S$ is a submodule of $M^0_{\x{E}_1}(L)|_S$. 
By our assumptions, $M^0_{\x{E}_1}(L)|_S$ is a finitely generated $R(L)|_S$-module. 
Therefore, $M^0_{\x{P}}(L)|_S$ is also a finitely generated $R(L)|_S$-module as 
$R(L)|_S$ is Noetherian by assumptions. Now simply apply Theorem \ref{c-dm-2} 
and Theorem \ref{t-dm-1} to prove that $R(L)$ is a finitely generated $A$-algebra 
and to show that $M^0_{\x{F}}(L)$ is a finitely generated $R(L)$-module for $\x{F}$ 
locally free, in particular, 
for $\x{F}$ of the form $\x{O}_X(l)$. Finally, apply Theorem \ref{t-lmc-ample-to-general} 
and Lemma \ref{r-change-p} to get the claim for any reflexive coherent $\x{F}$ and any $p$.\\
\end{proof}

\begin{rem}\label{rem-restriction-to-L} 
In Theorem \ref{t-dm-1}, 
we use the successive restrictions to the various $L_i$ in order to prove that 
in the exact sequence 
$$
0\to M^p_{\x{F}(-L)}(L) \to M^p_{\x{F}}(L) \to M^p_{\x{F}}(L)|_L \to 0
$$
the restriction $M^p_{\x{F}}(L)|_L$ is a finitely generated $R(L)|_L$-module. 
In practice, it might be easier to verify finite generation of the restrictions 
on the $L_i$ rather than directly on $L$, eg when $L_i$ are prime divisors on a 
normal variety $X$ over an algebraically closed field. So, it is crucial to allow the 
extra flexibility.
Similarly, in Theorem \ref{c-dm-2}, we could use the exact sequence 
$$
0\to M^0_{-L}(L) \to R(L) \to R(L)|_L \to 0
$$
where if $R(L)|_L$ is a finitely generated $A$-algebra, then  $R(L)$ would
also be a finitely generated $A$-algebra as can be easily checked.
But again it is probably easier to verify finite generation on the 
$L_i$ rather directly on $L$. 
\end{rem}

\begin{rem}
Let $X$ be a normal variety over an algebraically closed field $k$.
We can interpret sections of divisors on $X$ 
as rational functions. For example if $D$ is a Weil divisor on $X$, then we can 
describe $\x{O}_X(D)$ in a canonical way as 
$$
\x{O}_X(D)(U)=\{f\in K(X) \mid (f)+D|_U\ge 0\}
$$
where $U$ is an open subset of $X$ and $K(X)$ is the function field of $X$.
If $D'\sim D$, then of course $\x{O}_X(D')\simeq \x{O}_X(D)$ but these sheaves 
are not identical, i.e. they have different embedding in the constant sheaf associated 
to $K(X)$. To restrict the rational functions in $H^0(X,D)=\x{O}_X(D)(X)$ to a prime divisor  
$S$ we usually need to move $D$ to get it right. It is enough to choose $D'\sim D$ so that 
$S$ is not a component of $D'$. In this case, the rational functions in $H^0(X,D')$ 
have no poles along $S$ hence they can be restricted to $S$. Moreover, if $D''\sim D$ 
is another divisor whose support does not contain $S$, then the restrictions of 
$H^0(X,D')$ and $H^0(X,D'')$ to $S$ are isomorphic as $k$-vector spaces.
\end{rem}

\begin{exa}\label{exa-lcm-illustration}
In this example, we illustrate Theorem \ref{c-dm-2} in a more familiar setting. 
Let $X$ be a smooth projective variety over an algebraically closed field $k$.
Assume that $L=L_1+2L_2$ where $L_1,L_2$ are distinct prime divisors. 
Let $L_i'\sim L_i$ so that $L_1,L_2,L_1',L_2'$ are pairwise with no 
common components. Let $L'=L_1'+2L_2'$. 
Obviously, $R(L)$ is a finitely generated $k$-algebra iff $R(L')$ is so. 
Assume that  the restriction of 
$R(L')$ to $L_1$ is a finitely generated $k$-algebra. We have the exact sequence 
$$
0\to M^0_{-L_1}(L')\to R(L') \to R(L')|_{L_1} \to 0
$$
According to Lemma \ref{l-modules-to-algebras}, $R(L')$ is a finitely generated 
$k$-algebra iff $M^0_{-L_1}(L')$ is a finitely generated $R(L')$-module.

Now $M^0_{-L_1}(L')\simeq M^0_{-L_1'}(L')$ and we have the exact sequence 
$$
0\to M^0_{-L_1'-L_2}(L')
\to M^0_{-L_1'}(L') \to M^0_{-L_1'}(L')|_{L_2} \to 0
$$
So, assuming that $M^0_{-L_1'}(L')|_{L_2}$ is a finitely generated module over  
the restriction $R(L')|_{L_2}$, $M^0_{-L_1'}(L')$ is a finitely generated 
$R(L')$-module iff $M^0_{-L_1'-L_2}(L')$ is a finitely generated $R(L')$-module. 
Moreover, $M^0_{-L_1'-L_2}(L')\simeq M^0_{-L_1'-L_2'}(L')$ and we have the exact sequence 
$$
0\to M^0_{-L_1'-L_2'-L_2}(L') \to M^0_{-L_1'-L_2'}(L') \to M^0_{-L_1'-L_2'}(L')|_{L_2} \to 0
$$
So, assuming that $M^0_{-L_1'-L_2'}(L')|_{L_2}$ is finitely generated over $R(L')|_{L_2}$, 
 $M^0_{-L_1'-L_2'}(L')$ is a finitely generated 
$R(L')$-module iff $M^0_{-L_1'-L_2'-L_2}(L')$ is a finitely generated $R(L')$-module. 
Finally, note that  
$$
M^0_{-L_1'-L_2'-L_2}(L')\simeq M^0_{-L_1'-2L_2'}(L')=M^0_{-L'}(L')
$$ 
and that $M^0_{-L'}(L')$ is finitely generated as an $R(L')$-module 
because if $\beta \in M^0_{-L'}(L')$ is homogeneous of degree $m>0$, 
then $\beta=\beta \cdot 1$ where the $\beta$ on the right hand side 
is considered as an element of $R(L')$ of degree $m-1$ and the $1$ is 
considered as an element of $M^0_{-L'}(L')$ of degree $1$. So, we have 
the desired finite generation.\\
\end{exa}

\begin{rem}
Let $(X,B)$ be a projective log smooth dlt pair over $\C$, and $B$ rational. 
Assume that $I(K_X+B)$ is integral for some positive integer $I$. 
Moreover, assume that $I(K_X+B)\sim L=\sum l_iL_i\ge 0$ where $L_i$ 
are prime divisors. Let $S=L_j$ for some $j$. To prove that $R(L)|_S$ is a 
finitely generated algebra, one can hope to somehow relate it to the algebra 
$R(L|_S)$. This is difficult to achieve for arbitrary $S$. 
However, if $S$ is a component of $\rddown{B}$, then it is expected 
that finite generation of $R(L)|_S$ is closely linked with 
the finite generation of $R(L|_S)$ because $L|_S\sim I(K_S+B_S)=I(K_X+B)|_S$ 
where $(S,B_S)$ is dlt. The ideal situation is when every 
component of $L$ is a component of $\rddown{B}$ (i.e. $\theta(X,B,L)=0$ 
with the notation in [\ref{B-zd}, Definition 2.1]). 

In general, when $\theta(X,B,L)\neq 0$,  
one can increase $B$ to create $\overline{B}\ge B$ so that 
$$
\Supp(\overline{B}-B)\subseteq \Supp L\subseteq \Supp \rddown{\overline{B}}
$$
in particular 
$\theta(X,\overline{B},\overline{L})=0$ where $\overline{L}\sim I(K_X+\overline{B})$ 
and $\Supp \overline{L}=\Supp L$. One could first try to prove that $R(\overline{L})$ is finitely generated 
and then try to chace the finite generation back to $R(L)$. 
This process is one of the main techniques employed in the papers [\ref{BCHM}]
[\ref{B}][\ref{B-zd}][\ref{BP}] in the context of the minimal model program.
\end{rem}

\vspace{0.4cm}

\section{Log canonical modules and minimal models}
\vspace{0.1cm}

\textbf{Set up.} In this section, we work with quasi-projective varieties over $k=\C$
unless stated otherwise. Throughout the section, we let $X\to Z=\Spec A$ be a projective morphism of normal 
varieties over $k$ with $Z$ being \emph{affine}. In some places we take a boundary $B$ on $X$. 

We use the 
notion and notation of pairs and log minimal models as in [\ref{B-zd}].
We use the numerical Kodaira dimension $\kappa_\sigma$ as introduced by Nakayama [\ref{Nakayama}].
Let $D$ be a Weil divisor on $X$.  If  $h^0(X,D)\neq 0$, we let $\Fix D$ to be the largest 
effective Weil divisor satisfying $\Fix D\le D'$ for any effective divisor 
$D'\sim D$. We let the movable part of $D$ to be $\Mov D=D-\Fix D$. In particular, 
$H^0(X,\Mov D)=H^0(X,D)$.

\begin{thm}\label{t-lcm-abundance-to-fg}
Assume that $(X/Z,B)$ is lc and $L:=I(K_X+B)$ is Cartier for some integer $I>0$. 
Assume further that $(X/Z,B)$ has a log minimal model $(Y/Z,B_Y)$ on which $|I(K_Y+B_Y)|$ is base point 
free.  Then, $R(L)$ is a finitely generated $A$-algebra, and 
$M_{\x{F}}^p(L)$ is a finitely generated $R(L)$-module 
 for any $p$ and any torsion-free coherent sheaf $\x{F}$.
\end{thm}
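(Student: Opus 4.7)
The plan is to transfer the finite generation question from $X$ across the birational map $X\dashrightarrow Y$ and then down through the semi-ample fibration determined by $K_Y+B_Y$, landing on the base of an ample fibration where everything reduces to classical Serre theory. Set $M:=I(K_Y+B_Y)$, and let $\pi\colon Y\to Y'$ (over $Z$) be the morphism obtained from the base point free linear system $|M|$ after Stein factorization, so that $M\sim \pi^*H$ for an ample Cartier divisor $H$ on $Y'/\Spec A$. Choose a common log resolution $p\colon W\to X$, $q\colon W\to Y$ of the graph of $X\dashrightarrow Y$. A standard application of the negativity lemma, combined with the definition of log minimal model and the identity $q_*p^*=\phi_*$ on divisor classes, produces an effective $q$-exceptional Cartier divisor $IE$ on $W$ with $p^*L=q^*M+IE$. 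Since $q_*\x{O}_W(mIE)=\x{O}_Y$ for every $m\ge 0$ and $\pi_*\x{O}_Y=\x{O}_{Y'}$, two applications of the projection formula give
$$H^0(X,mL)=H^0(Y,mM)=H^0(Y',mH)$$
canonically in $m$. Hence $R(L)\cong R(H)$ as graded $A$-algebras, and the latter is finitely generated over $A$ because $H$ is ample over the affine base $\Spec A$.

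For the module statement, by Theorem \ref{t-lmc-ample-to-general}(1) and Lemma \ref{r-change-p} (using that the normal variety $X$ is integral), it suffices to produce some $m_0$ for which $M^{m_0}_G(L)$ is finitely generated over $R(L)$ for every very ample$/Z$ Cartier divisor $G$ on $X$. Consider the increasing chain of coherent sheaves $\x{J}_m:=q_*\x{O}_W(p^*G+mIE)$ on $Y$, with the inclusions $\x{J}_m\hookrightarrow \x{J}_{m+1}$ induced by the canonical section of $\x{O}_W(IE)$. On the open subset $V_Y:=Y\setminus q(\Supp IE)$, whose complement has codimension $\ge 2$ by $q$-exceptionality of $IE$, every $\x{J}_m$ restricts to the fixed coherent sheaf $(q_*\x{O}_W(p^*G))|_{V_Y}$. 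Consequently each $\x{J}_m$ embeds in the coherent reflexive hull $(q_*\x{O}_W(p^*G))^{**}$ on the normal variety $Y$, and by Noetherianness the chain stabilizes to some coherent $\x{J}$ for all $m\ge m_0$. For such $m$, the projection formula applied first to $q$ and then to $\pi$ yields
$$H^0(X,G+mL)=H^0(Y,\x{J}\otimes\x{O}_Y(mM))=H^0(Y',\pi_*\x{J}\otimes\x{O}_{Y'}(mH)).$$

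Since $Y'$ is projective over $\Spec A$, $H$ is ample$/\Spec A$, and $\pi_*\x{J}$ is coherent, Serre's theorem delivers finite generation of $\bigoplus_{m\ge 0}H^0(Y',(\pi_*\x{J})(mH))$ as a module over $R(H)\cong R(L)$. Therefore $M^{m_0}_G(L)$ is finitely generated over $R(L)$, and Lemma \ref{r-change-p} combined with Theorem \ref{t-lmc-ample-to-general}(1) promotes this to finite generation of $M^p_\x{F}(L)$ for every torsion-free coherent sheaf $\x{F}$ and every $p$. The principal technical hurdle is the production of $IE\ge 0$ with $q$-exceptional support and the stabilization of the chain $\{\x{J}_m\}$; these rest respectively on the precise content of the definition of log minimal model and on the reflexive hull machinery over the normal variety $Y$, and they carry the genuine birational-geometric content of the argument while the remainder of the proof is projection formula and classical Serre theory.
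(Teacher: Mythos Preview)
Your proof is correct and follows the same architecture as the paper: pass to a common resolution $W$, use the effective $q$-exceptional divisor to identify $R(L)\simeq R(L_Y)$ with the algebra on the minimal model, and then descend along the semi-ample fibration $\pi\colon Y\to T$ to reduce to an ample divisor over $\Spec A$.

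The one substantive difference is in how you handle the module for a very ample $G$ on $X$. You build the increasing chain $\x{J}_m=q_*\x{O}_W(p^*G+mIE)$, argue stabilization inside the reflexive hull, and then push the limit sheaf down to $Y'$. The paper bypasses this entirely with a one-line submodule argument: since pushing an effective divisor forward along the birational $q$ preserves effectivity and kills $q$-exceptional components, one has for every $m$ an inclusion
\[
H^0(X,mL+G)\;\simeq\; H^0\bigl(W,\,q^*mL_Y+mE+p^*G\bigr)\;\hookrightarrow\; H^0\bigl(Y,\,mL_Y+q_*p^*G\bigr),
\]
compatible with the $R(L)\simeq R(L_Y)$-action. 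Thus $M^p_G(L)$ sits inside $M^p_{q_*p^*G}(L_Y)$, which is finitely generated by the descent to $T$ and Theorem~\ref{t-lcm-big-divisors} applied to the torsion-free sheaf $\pi_*\x{O}_Y(q_*p^*G)$; Noetherianness of $R(L)$ finishes. Your stabilization argument is correct but does more work than necessary: you are producing the \emph{exact} pushforward when a containing finitely generated module already suffices. What your route buys is an explicit identification of $M^{m_0}_G(L)$ with a module coming from a fixed coherent sheaf on $Y'$, which could be useful if one wanted finer information, but for the bare finite-generation statement the paper's submodule shortcut is cleaner.
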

\begin{proof}
Let $f\colon W\to X$ and $g\colon W\to Y$ be a common resolution. Then, we can write 
$f^*I(K_X+B)=g^*I(K_Y+B_Y)+E$ where $E\ge 0$ and exceptional$/Y$ [\ref{B-zd}, Remark 2.4].
 Then, by letting $L_Y:=I(K_Y+B_Y)$ we have $R(L)\simeq R(L_Y)$ as $A$-algebras 
and this is a finitely generated $A$-algebra as $|L_Y|$ is base point free by assumptions.
In particular, the above ring is Noetherian.
Let $\x{G}$ be any torsion-free coherent sheaf on $Y$ and let $\pi\colon Y\to T/Z$ be the 
contraction defined by $|L_Y|$. There is a very ample$/Z$ divisor $N$ on $T$ such 
that $L_Y\sim \pi^* N$. Then, by the projection formula 
$$
\pi_*(\x{G}(mL_Y))\simeq (\pi_*\x{G})(mN)
$$
hence 
$$
H^0(Y,\x{G}(mL_Y))\simeq H^0(T,(\pi_*\x{G})(mN))
$$
so $R(L_Y)\simeq R(N)$ as $A$-algebras and 
$
M_{\x{G}}^p(L_Y)\simeq M_{\pi_*\x{G}}^p(N)
$ 
as modules. By Theorem \ref{t-lcm-big-divisors}, $M_{\pi_*\x{G}}^p(N)$ is a finitely generated 
$R(N)$-module hence $M_{\x{G}}^p(L_Y)$ is a finitely generated $R(L_Y)$-module.

Now we prove the finite generation of $M_{\x{F}}^p(L)$. 
By Theorem \ref{t-lmc-ample-to-general} (1), 
we may assume that $\x{F}=\x{O}_X(G)$ where $G$ is some very ample$/Z$ divisor.
We have isomorphisms
$$
H^0(X,mL+G)\simeq H^0(W,f^*mL+f^*G) 
$$
and this is isomorphic to a subspace of 
$H^0(Y,mL_Y+g_*f^*G)$. So, $M_{G}^p(L)$ is isomorphic to a 
submodule of $M_{g_*f^*G}^p(L_Y)$. Therefore, $M_{G}^p(L)$ is a finitely 
generated $R(L)$-module as $M_{g_*f_*G}^p(L_Y)$ is a finitely 
generated $R(L_Y)$-module.\\
\end{proof}

\begin{thm}\label{t-lcm-fg-abundance}
Assume that $Z=\rm pt$ and  $L$ is a Cartier divisor on $X$. 
Assume that for any very ample divisor $G$ 
the module $M_{G}^0(L)$ is finitely generated over $R(L)$. Then, 
abundance holds for $L$, that is, $\kappa(L)=\kappa_\sigma(L)$.
\end{thm}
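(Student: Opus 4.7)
The plan is to establish both inequalities in $\kappa(L)=\kappa_\sigma(L)$. The easier direction $\kappa(L)\le \kappa_\sigma(L)$ is immediate: fixing any very ample $G$, which is in particular effective, gives an injection $H^0(X,mL)\hookrightarrow H^0(X,mL+G)$ for every $m$, so $\sigma_G(L)\ge \kappa(L)$ and hence $\kappa_\sigma(L)\ge \kappa(L)$. The heart of the proof is therefore the reverse inequality $\kappa_\sigma(L)\le \kappa(L)$, for which the strategy is to use the finite generation of $M^0_G(L)$ to bound the growth of $h^0(X,mL+G)$ in terms of that of $h^0(X,(m-d)L)$ for finitely many fixed shifts $d$.

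Concretely, I would fix a very ample $G$ and choose homogeneous generators $\gamma_1,\dots,\gamma_r$ of $M^0_G(L)$ over $R(L)$, of degrees $d_1,\dots,d_r\ge 0$. Finite generation gives, for $m\ge \max_i d_i$,
$$
H^0(X,mL+G)=\sum_{i=1}^r H^0(X,(m-d_i)L)\cdot \gamma_i,
$$
and hence the dimension bound $h^0(X,mL+G)\le \sum_{i=1}^r h^0(X,(m-d_i)L)$. If $\kappa(L)\ge 0$, I would then invoke the standard Iitaka-type bound $h^0(X,nL)\le C\, n^{\kappa(L)}$ (valid for all $n\ge 1$ with some constant $C>0$) to conclude $h^0(X,mL+G)\le C'm^{\kappa(L)}$ for $m\gg 0$, so $\sigma_G(L)\le \kappa(L)$. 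If instead $\kappa(L)=-\infty$, then $R(L)=\C$, so finite generation of $M^0_G(L)$ forces it to be finite-dimensional over $\C$; hence $h^0(X,mL+G)=0$ for $m\gg 0$ and $\sigma_G(L)=-\infty$. In either case $\sigma_G(L)\le \kappa(L)$ for every very ample $G$.

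Finally, to pass from $\sigma_G$ over very ample $G$ to the supremum $\kappa_\sigma(L)$ over all ample divisors, I would argue as follows: given any ample $A$, choose $n$ large enough that $nA$ is very ample and $(n-1)A$ is effective. The inclusion $H^0(X,mL+A)\hookrightarrow H^0(X,mL+nA)$ then gives $\sigma_A(L)\le \sigma_{nA}(L)\le \kappa(L)$, and taking the supremum over ample $A$ yields $\kappa_\sigma(L)\le \kappa(L)$. The main obstacle I expect is not deep geometry but careful bookkeeping: in particular, invoking the polynomial bound $h^0(X,nL)\le C n^{\kappa(L)}$ uniformly in $n$ (rather than only along the subsequence of sufficiently divisible $n$) and cleanly separating the degenerate case $\kappa(L)=-\infty$, where $R(L)$ collapses to the ground field and the module-theoretic argument degenerates.
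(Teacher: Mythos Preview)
Your proposal is correct and follows essentially the same route as the paper: the core step---using finite generation of $M^0_G(L)$ to bound $h^0(X,mL+G)\le\sum_i h^0(X,(m-d_i)L)$ via a fixed set of homogeneous generators---is exactly what the paper does. The only cosmetic difference is that the paper picks a single very ample $G$ realizing the $\limsup$ in the definition of $\kappa_\sigma(L)$ and argues directly from that inequality, whereas you bound $\sigma_G(L)\le\kappa(L)$ for every very ample $G$ and then pass to the supremum; your concern about the uniform upper bound $h^0(X,nL)\le Cn^{\kappa(L)}$ is indeed just bookkeeping (it holds for all $n\ge 1$ since it is standard for $n$ in the semigroup $N(L)$ and trivial otherwise).
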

\begin{proof}
The inequality $\kappa(L)\le \kappa_\sigma(L)$ follows from the fact that 
$\kappa(L)=\kappa(JL)$ and $\kappa_\sigma(L)=\kappa_\sigma(JL)$ for any 
positive integer $J$ and the fact that for some $J$ 
and certain constants $c_1,c_2>0$ we have
$$
c_1m^{\kappa(L)}\le h^0(X,mJL) \le c_2m^{\kappa(L)}
$$
for any $m\gg 0$.

For the converse $\kappa(L)\ge \kappa_\sigma(L)$, we may assume that $\kappa_\sigma(L)\ge 0$ 
and we can choose a very ample divisor $G$ so that $\kappa_\sigma(L)$ satisfies 
$$
\limsup_{m\to +\infty} \frac{h^0(X,mL+G)}{m^{\kappa_\sigma(L)}}>0 
$$
By assumptions,  $M_{G}^0(L)$ is a finitely generated $R(L)$-module. 
Let $\{\alpha_1,\dots,\alpha_r\}$ be a set of generators of homogeneous elements 
with $n_i:=\deg \alpha_i$. For any $\alpha\in M^0_G(L)$ of degree $m$, there are 
homogeneous elements $a_i\in R(L)$ such that $\alpha=\sum_i a_i\alpha_i$. 
It is clear that $\deg a_i=m-n_i$. Thus, 
$$
 h^0(X,(m-n_1)L)+\dots+h^0(X,(m-n_r)L)\ge h^0(X,mL+G)
$$ 
which implies that 
$$
\limsup_{m\to +\infty} \frac{h^0(X,(m-n_1)L)+\dots+h^0(X,(m-n_r)L)}{m^{\kappa_\sigma(L)}}>0
$$
hence $\kappa(L)\ge \kappa_\sigma(L)$.\\
\end{proof}

The following theorem is well-known (cf. [\ref{pl-flips}]).

\begin{thm}\label{t-lcm-fg-sf} 
Let $L$ be a Cartier divisor on $X$ with $h^0(X,nL)\neq 0$ for some $n>0$. 
Then, the following are equivalent:

$(1)$ $R(L)$ is a finitely generated $A$-algebra;

$(2)$ there exist a projective 
birational morphism $f\colon W\to X$ from a smooth variety, a positive integer 
$J$, and Cartier divisors $E$ and $F$ such that $|F|$ is base point free, and 
$$
\Mov f^*mJL=mF ~~~\mbox{and}~~~ \Fix f^*mJL=mE
$$ 
 for every positive integer $m$. 
\end{thm}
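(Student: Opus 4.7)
The plan is to treat the two directions separately; direction $(2) \Rightarrow (1)$ is a short computation that transports $R(L)$ to the ring of the base-point-free divisor $F$ on $W$, whereas $(1) \Rightarrow (2)$ carries the geometric content.

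For $(2) \Rightarrow (1)$, I will identify $R(JL)$ with $R(F)$. Since $f$ is projective birational onto the normal variety $X$, one has $f_*\x{O}_W(f^*D) = \x{O}_X(D)$ for every Cartier $D$ on $X$, so $H^0(W,f^*mJL) = H^0(X,mJL)$ and hence $R(JL) \cong \bigoplus_m H^0(W,f^*mJL)$ as graded $A$-algebras. The hypothesis $\Fix f^*mJL = mE$ means that multiplication by the canonical section $s_E^m$ of $\x{O}_W(mE)$ gives an isomorphism $H^0(W,mF) \to H^0(W,mF+mE)$: injectivity is automatic, and surjectivity is equivalent to $mE$ being contained in every element of $|f^*mJL|$. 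Hence $R(JL) \cong R(F)$ as graded $A$-algebras. Since $|F|$ is base point free, $F$ is semi-ample and $R(F)$ is a finitely generated $A$-algebra, by passing to the morphism $\pi\colon W \to T$ defined by $|F|$ (with $F \sim \pi^* H$ for some ample $H$ on $T$) and applying Serre's theorem to the coherent sheaves $\x{O}_T(mH)\otimes \pi_*\x{O}_W$. Finite generation of $R(L)$ then follows from that of $R(JL)$ by the truncation principle for algebras.

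For $(1) \Rightarrow (2)$, the strategy is to trivialise all the base loci simultaneously on a single log resolution after a suitable truncation. The first step is a commutative-algebra reduction: since $R(L)$ is a finitely generated $A$-algebra, there exists an integer $J>0$ for which the Veronese subalgebra $R(JL)$ is generated in degree one by $H^0(X,JL)$ over $A$; multiplying $J$ by the $n$ from the hypothesis $h^0(X,nL) \neq 0$, one also arranges $h^0(X,JL)>0$. This degree-one generation forces the base-ideal identity $\mathfrak{b}_m = \mathfrak{b}_1^m$, where $\mathfrak{b}_m \subseteq \x{O}_X$ is the base ideal of $|mJL|$, because a basis of $H^0(X,JL)$ trivialises locally to functions whose $m$-fold products span $H^0(X,mJL)$. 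Take $f\colon W\to X$ to be a log resolution of $\mathfrak{b}_1$ with $W$ smooth (available by Hironaka over $\C$), so that $\mathfrak{b}_1\cdot \x{O}_W = \x{O}_W(-E)$ for an effective Cartier divisor $E$, and hence $\mathfrak{b}_m\cdot \x{O}_W = \x{O}_W(-mE)$. Since $H^0(W,f^*mJL) = H^0(X,mJL)$, the base ideal of $|f^*mJL|$ on $W$ is exactly $\x{O}_W(-mE)$, which reads off $\Fix f^*mJL = mE$ and $\Mov f^*mJL = mF$ with $F:=f^*JL - E$. Base-point-freeness of $|F|$ then follows from the factorisation $\mathfrak{b}(|f^*JL|) = \mathfrak{b}(|F|)\cdot \x{O}_W(-E)$, which holds because every element of $|f^*JL|$ has the form $D'+E$ with $D'\in |F|$; both sides equal the invertible ideal $\x{O}_W(-E)$, so $\mathfrak{b}(|F|) = \x{O}_W$.

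The main obstacle is the commutative-algebra reduction at the start of the converse, namely the existence of a $J$ for which $R(JL)$ is generated in degree one as an $A$-algebra. This is classical but not entirely trivial; once it is in hand, the identity $\mathfrak{b}_m = \mathfrak{b}_1^m$ makes the rest a routine manipulation of invertible ideal sheaves on a log resolution. The subsidiary points --- that $h^0(X,JL)\neq 0$ after truncation so that $\mathfrak{b}_1$ is non-zero (using the hypothesis $h^0(X,nL)\neq 0$) and the existence of a smooth log resolution --- are standard.
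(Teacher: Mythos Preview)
Your proof is correct and follows essentially the same approach as the paper: both directions match, with $(2)\Rightarrow(1)$ identifying $R(JL)\cong R(F)$ and invoking the truncation principle, and $(1)\Rightarrow(2)$ passing to a Veronese generated in degree one and then resolving the base locus of $|JL|$. The only cosmetic difference is that the paper verifies $\Fix mf^*JL = mE$ by a direct minimal-counterexample multiplicity computation on $W$, whereas you encode the same content as the base-ideal identity $\mathfrak{b}_m = \mathfrak{b}_1^m$ on $X$ before pulling back to the log resolution.
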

\begin{proof}
Assume that $R(L)$ is a finitely generated $A$-algebra. Perhaps after replacing 
$L$ with $JL$ for some positive integer 
$J$, we may assume that the algebra $R(L)$ is generated by elements 
$\alpha_1,\dots, \alpha_r$ of degree $1$, 
and that there is a resolution $f\colon W\to X$ on which $f^*L=F+E$ where $F$ is free, 
$\Mov f^*L=F$, and $\Fix f^*L=E$. We could in addition assume that $F\ge 0$ with no 
common components with $E$.
 Obviously, $\Fix mf^*L\le mE$ for any $m>0$. 
Suppose that equality does not hold for some $m>0$. Take $m>0$ minimal with this property. 
There is $\alpha\in H^0(W,mf^*L)$ and a component 
$S$ of $E$ such that $\mu_S(\alpha)<0$ where $\mu$ stands for multiplicity, 
that is, the coefficient and $(\alpha)$ is the divisor associated to the 
rational function $\alpha$.  
Since $E=\Fix f^*L$, $m>1$. By assumptions, 
$\alpha=\sum a_i\alpha_i$ where $a_i$ are elements 
of $H^0(W,(m-1)f^*L)$. Thus, 
$$
\mu_S(\alpha)\ge \min\{\mu_S(a_i)+\mu_S(\alpha_i)\}
$$
Assume that the minimum is equal to $\mu_S(a_j)+\mu_S(\alpha_j)$ for some $j$. 
Since $E=\Fix f^*L$, we have $\mu_S(\alpha_j)\ge 0$ hence  
$0>\mu_S(a_j)+\mu_S(\alpha_j)\ge \mu_S(a_j)$. This is a  contradiction since 
the minimality of $m$ ensures that $\mu_S(a_j)\ge 0$.

Conversely, assume that there exist $f\colon W\to X$,  
$J$, $E$, and $F$ as in the theorem. Then, $R(JL)\simeq R(f^*JL)\simeq R(F)$ 
is a finitely generated $A$-algebra as $|F|$ is base point free. This implies 
that $R(L)$ is a finitely generated $A$-algebra by the so-called truncation principle [\ref{pl-flips}, Theorem 4.6].\\ 
\end{proof}

\begin{thm}\label{l-lcm-strong-sf} 
Let $L$ be a Cartier divisor on $X$ with $h^0(X,nL)\neq 0$ for some $n>0$ and that 
$R(L)$ is a finitely generated $A$-algebra.
Assume further that $M^0_{G'}(L)$ is a finitely generated 
$R(L)$-module for any very ample$/Z$ divisor $G'$. Let $f,W,F,E,J$ be as in Theorem \ref{t-lcm-fg-sf}. Fix a 
nonnegative integer $r$ and a very ample$/Z$ divisor $G$ on $W$. Then, 
$$
\Supp \Fix(m(f^*JL+rF)+G)=\Supp E
$$
for every integer $m\gg 0$.
\end{thm}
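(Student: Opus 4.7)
The plan is to prove the two support inclusions separately for $m\gg 0$. Using $f^*JL=F+E$, rewrite the divisor as $m(f^*JL+rF)+G=mE+\bigl(m(1+r)F+G\bigr)$. Since $|F|$ is base-point-free (Theorem~\ref{t-lcm-fg-sf}) and $G$ is very ample$/Z$, the divisor $m(1+r)F+G$ is base-point-free for $m\ge 0$. Adding the effective divisor $mE$ to any member of $|m(1+r)F+G|$ yields a member of $|m(f^*JL+rF)+G|$, so $\Fix(m(f^*JL+rF)+G)\le mE$, which proves $\Supp\Fix(m(f^*JL+rF)+G)\subseteq\Supp E$.

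For the reverse inclusion, fix a component $S$ of $E$ and set $e:=\mu_S(E)>0$. Choose a very ample$/Z$ divisor $G'_0$ on $X$; since $f^*G'_0$ is big on $W$, for $a$ sufficiently large the divisor $af^*G'_0-G$ admits an effective representative $D_0$. Put $G':=aG'_0$. Given any $\sigma\in H^0(W,m(f^*JL+rF)+G)$, multiplying by the canonical sections of the effective divisors $mrE$ and $D_0$ produces a section
$$
\tau\in H^0(W, m(1+r)f^*JL+f^*G')=H^0(X, m(1+r)JL+G'),
$$
using $mrF+mrE=mrf^*JL$ and $G+D_0\sim f^*G'$, with zero divisors related by $Z(\tau)=Z(\sigma)+mrE+D_0$. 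This inflation step lifts $\sigma$ to a section pulled back from $X$, where the finite generation hypothesis is available.

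By the truncation principle (Remark~\ref{r-lcm-truncation}), finite generation of $M^0_{G'}(L)$ over $R(L)$ forces $M^0_{G'}(JL)$ to be finitely generated over $R(JL)$. Let $\tau_1,\dots,\tau_s$ be homogeneous generators of $JL$-degrees $n_i\le n_0$, and write $\tau=\sum a_i\tau_i$ with $a_i\in H^0(X,(m(1+r)-n_i)JL)$. Pulling back to $W$ and using $\Fix(m'f^*JL)=m'E$ from Theorem~\ref{t-lcm-fg-sf}, one has $\mu_S(Z(a_i))\ge(m(1+r)-n_i)e$, while each $\mu_S(Z(\tau_i))$ is a fixed constant; therefore $\mu_S(Z(\tau))\ge m(1+r)e-C$ for a constant $C$ independent of $m$ and $\sigma$. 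Combining with $\mu_S(Z(\tau))=\mu_S(Z(\sigma))+mre+\mu_S(D_0)$ yields $\mu_S(Z(\sigma))\ge me-C-\mu_S(D_0)$. Since $e>0$, this exceeds $1$ for $m\gg 0$ uniformly in $\sigma$, giving $S\subseteq\Supp\Fix(m(f^*JL+rF)+G)$. As $E$ has only finitely many components, a single threshold for $m$ suffices.

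The main technical obstacle is the inflation trick. One needs to lift sections from $W$ into the image of pullback from $X$ in order to invoke the finite generation hypothesis, but the multiplier $\eta_{mrE}$ itself contributes an unwanted $mre$ to $\mu_S(Z(\tau))$. The argument succeeds precisely because this contribution is exactly offset by the $mre$ gained on the right-hand side when the $JL$-coefficient is enlarged from $m$ to $m(1+r)$. The clean cancellation leaves the decisive linear term $me$ in the lower bound for $\mu_S(Z(\sigma))$, which is what forces the fixed part of $|m(f^*JL+rF)+G|$ to capture every component of $E$.
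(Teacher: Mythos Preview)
Your proof is correct and follows essentially the same route as the paper's: the easy inclusion via base-point-freeness of $F$ and $G$, then inflation by the canonical section of $mrE$ together with an effective $D_0\sim f^*G'-G$ to land in $M^0_{G'}(JL)$, followed by the multiplicity estimate coming from finite generation and $\Fix(m'f^*JL)=m'E$. The paper streamlines notation by first reducing to $W=X$, $J=1$ (absorbing your $D_0$-step into the inequality $G\le f^*G'$) and phrases the endgame as a bound on $m$ by contradiction using rational functions rather than your direct lower bound on $\mu_S(Z(\sigma))$, but the substance is identical.

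One small point: Remark~\ref{r-lcm-truncation} only states the implication in the opposite direction (finite generation of the $N_i$ over $R^{[J]}$ gives finite generation of $M$ over $R$). For the direction you need, the paper observes that $R(L)$ is integral over $R(L)^{[J]}$ (each homogeneous $a$ satisfies $a^J\in R(L)^{[J]}$), so $M^0_{G'}(L)$, being finitely generated over $R(L)$, is already finitely generated over $R(L)^{[J]}$; then its direct summand $N_0\simeq M^0_{G'}(JL)$ is finitely generated over $R(JL)$.
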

\begin{proof}
Let $G'$ be a very ample$/Z$ divisor on $X$ such that $G\le f^*G'$. 
 By assumptions, $R(L)$ is a Noetherian ring and $M=M^0_{G'}(L)$ is a Noetherian $R(L)$-module. Moreover, 
$R(L)$ is integral over the ring $R(L)^{[J]}$ which implies that $M$ is a finitely generated 
$R(L)^{[J]}$-module. Put 
$N_0=\bigoplus_{m\ge 0} N_{m,0}$ where $N_{m,0}=M_m$ if $J|m$ but 
$N_{m,0}=0$ otherwise, as in Remark \ref{r-lcm-truncation}. Since $N_0$ is an $R(L)^{[J]}$-submodule, 
it is finitely generated over $R(L)^{[J]}$.
This corresponds to saying that $M_{G'}^0(JL)$ is a finitely generated $R(JL)$-module.  
On the other hand, $M_{G}^0(f^*JL)$ is a submodule of  $M_{f^*G'}^0(f^*JL)$ hence a 
finitely generated $R(f^*JL)$-module.
Thus, after replacing $L$ with $f^*JL$ and $X$ with $W$ we can assume that $J=1$ and $W=X$.
We may also assume that $F,G\ge 0$ and that $F+G$ has no common component with $E$.

Obviously, 
$$
\Supp \Fix(m(L+rF)+G) \subseteq\Supp E
$$
 for every integer $m>0$.
 Assume that there is a component 
$S$ of $E$ which does not belong to $\Supp \Fix(m(L+rF)+G)$ for some $m>0$. 
Let $\alpha\in H^0(X,m(L+rF)+G)$ so that 
$$
S\nsubseteq \Supp ((\alpha)+m(L+rF)+G)
$$
which in particular means that $\mu_S(\alpha)=-m\mu_SE$.
Since 
$$
(m+mr)L+G=m(L+rF)+G+mrE
$$
and $mE\ge 0$, there is $\alpha'$ in  $M_{G}^0(L)$ of degree $m+mr$ such that $\alpha'=\alpha$ 
as rational functions on $X$.

Assume that $\{\alpha_1,\dots,\alpha_r\}$ is a set of homogeneous generators of $M_{G}^0(L)$ 
with $n_i:=\deg \alpha_i$.
We can write $\alpha'=\sum a_i\alpha_i$ where $a_i\in R(L)$ is homogenous of 
degree $m+mr-n_i$. Therefore, 
$$
\mu_S(\alpha')\ge \min\{\mu_S(a_i)+\mu_S(\alpha_i)\}
$$
Since 
$$
\Fix (m+mr-n_i)L=(m+mr-n_i)E
$$ 
we have $\mu_S(a_i)\ge 0$ hence if the above minimum is attained 
at index $j$, then 
$$
-m\mu_SE=\mu_S(\alpha)=\mu_S(\alpha')\ge \mu_S(\alpha_j)
$$ 
from which we get $m\mu_SE\le -\mu_S(\alpha_j)$. This means that such $m$ cannot be too large so 
the theorem holds for $m\gg 0$.\\
\end{proof}

Next, using the results of [\ref{BCHM}], we can prove the converse of 
Theorem \ref{t-lcm-abundance-to-fg}. 

\begin{thm}\label{t-lcm-abundance-from-fg}
Assume that $(X/Z,B)$ is lc and assume that there is a positive 
integer $I$ such that

$(1)$ $L:=I(K_X+B)$ is Cartier and pseudo-effective$/Z$, 

$(2)$ $R(L)$ is a finitely generated $A$-algebra, and 

$(3)$ for any very ample$/Z$ divisor $G$ 
the module $M_{G}^0(L)$ is finitely generated over $R(L)$. 

Then, there is a log minimal model $(Y/Z,B_Y)$ for $(X/Z,B)$ on which $K_Y+B_Y$ is semi-ample$/Z$.
\end{thm}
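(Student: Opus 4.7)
The plan is to pass to a log resolution, exploit the finite generation hypotheses via Theorems \ref{t-lcm-fg-sf} and \ref{l-lcm-strong-sf} to obtain a base-point-free presentation of a power of $L$, and invoke [BCHM] together with the negativity lemma to verify the resulting model is a log minimal model with semi-ample log canonical divisor.

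First, I would take a log resolution $f \colon W \to X$ of $(X,B)$ and write $K_W + B_W = f^*(K_X+B) + E_W$ with $B_W, E_W \geq 0$ having no common components, $E_W$ exceptional over $X$, and $(W,B_W)$ log smooth dlt. The projection formula gives isomorphisms $R(I(K_W+B_W)) \simeq R(L)$ and $M^0_{f^*G}(I(K_W+B_W)) \simeq M^0_G(L)$ for any very ample$/Z$ divisor $G$ on $X$, so the finite generation hypotheses transfer to $(W,B_W)$. A log minimal model of $(W,B_W)$ with semi-ample$/Z$ log canonical divisor is automatically one for $(X,B)$, since each divisor extracted by $f$ appears in $E_W$ with positive coefficient. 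We may therefore assume $(X,B)$ is log smooth dlt.

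Next, by Theorem \ref{t-lcm-fg-sf} applied to $L$, after replacing $I$ by a positive multiple and further blowing up, we arrange $L = F + E$ on $X$ with $|F|$ base point free, $\Mov mL = mF$ and $\Fix mL = mE$ for every $m \geq 1$. This yields a contraction $\psi \colon X \to T/Z$ with $F = \psi^*H$ for some $H$ very ample$/Z$ on $T = \Proj R(L)$, the log canonical model of $(X,B)$. Theorem \ref{l-lcm-strong-sf} supplies the rigid refinement $\Supp\Fix(m(L+rF)+G) = \Supp E$ for $r \geq 0$, any very ample$/Z$ divisor $G$ on $X$, and $m \gg 0$, precisely controlling the fixed locus under small ample perturbations.

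To construct the log minimal model, I would run a $(K_X+B)$-MMP over $T$ with scaling of an ample divisor. After a small klt perturbation of $B$ to which [BCHM] directly applies, the MMP terminates with a good minimal model $(Y/T, B_Y)$ on which $K_Y+B_Y$ is semi-ample$/T$; combined with $H$ being ample$/Z$ on $T$, this makes $K_Y+B_Y$ semi-ample$/Z$. The finite generation of $R(L)$ and each $M^0_G(L)$ ensures that the perturbation can be chosen so the contracted locus stabilises (as the perturbation parameter shrinks) to exactly the components of $E$, as dictated by Theorem \ref{l-lcm-strong-sf}, producing a log minimal model for the original pair $(X,B)$. The negativity lemma applied to the fixed part $E$ then verifies that every $Y$-exceptional prime divisor has strictly smaller log discrepancy in $(X,B)$ than in $(Y,B_Y)$, confirming the log minimal model conditions of [B-zd]. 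The main obstacle is invoking [BCHM] in the lc (not klt) setting: one must perturb $B$ to a klt boundary, apply [BCHM] to the perturbation, and use the finite generation hypotheses of both $R(L)$ and $M^0_G(L)$ to justify stability of the resulting model as the perturbation shrinks. A secondary difficulty is matching the contracted locus of the MMP with $\Supp E$, which is where Theorem \ref{l-lcm-strong-sf} enters decisively.
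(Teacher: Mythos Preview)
Your outline has the right skeleton but two genuine gaps.

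First, you invoke Theorem \ref{t-lcm-fg-sf} without verifying its hypothesis $h^0(X,nL)\neq 0$ for some $n>0$; pseudo-effectivity over $Z$ does not give this directly. The paper obtains it by passing to the generic fibre $V$ of $X\to Z$, where base change shows conditions (2) and (3) persist, and then applying Theorem \ref{t-lcm-fg-abundance} to conclude $\kappa(L|_V)\ge 0$.

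Second, and more seriously, your termination argument is not actually an argument. Saying the contracted locus ``stabilises to $\Supp E$ as the perturbation shrinks, as dictated by Theorem \ref{l-lcm-strong-sf}'' names the desired conclusion without supplying a mechanism. The paper does not perturb $B$ to klt. Instead it runs the LMMP$/Z$ on $K_X+B+rF'$ (with $F'$ general in $|F|$ and $r\gg 0$, so that by Kawamata's bound on extremal ray lengths the LMMP is automatically over $T$) with scaling of a very ample $G$. If this LMMP fails to contract some component $S$ of $E$, then from [\ref{BCHM}] the scaling numbers satisfy $\lambda_i\to 0$ and each $K_{X_i}+B_i+rF_i'+\lambda_iG_i$ is semi-ample$/Z$, yielding effective $N_i\sim_\Q K_X+B+rF'+\lambda_iG$ with $S\not\subset\Supp N_i$. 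Here Theorem \ref{l-lcm-strong-sf} alone is not enough: one needs Nakayama's Theorem \ref{c-n-sigma}, which converts the sequence $(N_i,\lambda_i)$ into the statement that a closed point of $S$ avoids $\Bs|mD+G|$ for all $m>0$ and some very ample $G$, and this is what contradicts Theorem \ref{l-lcm-strong-sf}. You have omitted both the $rF'$ device and the appeal to Theorem \ref{c-n-sigma}; without them the bridge between the MMP and $\Supp E$ is missing, and your ``klt perturbation plus stability'' sketch does not substitute for it.
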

\begin{proof}
We may assume that $X\to Z$ is surjective. Let $V$ be the generic fibre of 
$X\to Z$. As $Z$ is affine, by base change theorems, $R(L|_V)\simeq R(L)\otimes_AK$ is a finitely generated $K$-algebra, and 
$M^0_{G|_V}(L|_V)\simeq M^0_G(L)\otimes_AK$ is a finitely generated $R(L|_V)$-module where $K$ is the 
function field of $Z$ and $G$ is any very ample$/Z$ divisor on $X$. By Theorem \ref{t-lmc-ample-to-general}
and Theorem \ref{t-lcm-fg-abundance}, $\kappa(L|_V)\ge 0$ which in particular implies that 
$h^0(X,nL)\neq 0$ for some $n>0$.

 Let $f,W,E,F,J$ be as in Theorem \ref{t-lcm-fg-sf}. We may assume that $f$ gives a log resolution of 
 $(X/Z,B)$. Let $B_W$ be $B^\sim$ plus the reduced 
 exceptional divisor of $f$ where $B^\sim$ is the birational transform of $B$.
 We can write 
$$
JI(K_W+B_W)=JIf^*(K_X+B)+E'
$$
 where $E'\ge 0$ is exceptional$/X$. Pick any $\alpha\in H^0(W,mJI(K_W+B_W))$ and 
let 
$$
P:=(\alpha)+mJI(K_W+B_W)= (\alpha)+mF+mE+mE'
$$ 
Since $P-mE'\equiv 0/X$, $P-mE'\ge 0$ by the negativity lemma. 
On the other hand, $\Fix (P-mE')=mE$ hence $P-mE'\ge mE$. This implies that 
$$
\Fix mJI(K_W+B_W)=mE+mE' ~~~\mbox{and} ~~~\Mov mJI(K_W+B_W)=mF
$$
It is enough to construct a log minimal model for 
$(W/Z,B_W)$ with the required properties. Note that $(W/Z,B_W)$ satisfies the conditions (1) and (2) 
of the theorem, automatically. Condition (3) is also preserved: if $G$ is a very ample$/Z$ divisor 
on $W$, there is a very ample$/Z$ divisor $G'$ on $X$ such that $G\le f^*G'$ hence 
$M^0_{G}(I(K_W+B_W))$ is a finitely generated $R(I(K_W+B_W))$-module as it is a submodule 
of $M^0_{f^*G'}(I(K_W+B_W))\simeq M^0_{G'}(I(K_X+B))$.
Therefore, 
by replacing $(X/Z,B)$ with $(W/Z,B_W)$ from now on we can assume that 
$W=X$, $f$ is the identity and $E'=0$. 
Let  
 $g\colon X\to T$ be the contraction$/Z$ defined by $|F|$. 

Let  $F'$ be a general element of $|F|$. 
We can choose a very ample$/Z$ divisor $G\ge 0$ 
so that $K_X+B+rF'+G$ 
is nef$/Z$ and that $(X/Z,B+rF'+G)$ is dlt. 
Run the LMMP$/Z$ on $K_X+B+rF'$ with scaling of $G$.  
By boundedness of the length of extremal rays due to Kawamata, 
if $r$ is a sufficietly large integer, then the LMMP 
is over $T$, i.e. only extremal rays over $T$ are contracted. 
Suppose that, perhaps after some log flips and divisorial contractions, we get an infinite sequence of 
log flips $X_i\bir X_i/Z_i$. Let $\lambda_i$ be the numbers appearing in the 
LMMP with scaling in the above sequence of log flips, that is, $K_{X_i}+B_i+rF_{i}'+\lambda_iG_i$ 
is nef$/Z$ and numerically trivial over $Z_i$ where $B_i,F_i',G_i$ are birational 
transforms on $X_i$.  By [\ref{BCHM}], $\lambda:=\lim \lambda_i=0$.
Moreover, for each $i$, $K_{X_i}+B_{i}+rF_{i}'+\lambda_iG_{i}$ is semi-ample$/Z$. Thus, 
if $S$ is a component of $E$ not contracted by the LMMP, then there exist 
$$
0\le N_{i}\sim_\Q K_X+B+rF'+\lambda_i G
$$
not containing $S$.
This contradicts Theorem \ref{c-n-sigma} and Theorem \ref{l-lcm-strong-sf}. 
Therefore, $E$ is contracted by the LMMP and $K_{X_i}+B_{i}+rF_{i}'$ is 
$\Q$-linearly a multiple of $F_{i}'$. 
But $|F_{i}'|$ is base point free as the LMMP we ran is over $T$. Thus, 
the LMMP terminates with a log minimal model satisfying the desired properties.\\ 
\end{proof}

The following theorem was proved by Nakayama [\ref{Nakayama}, Theorem 6.1.3]. 
He treated the case $Z=\rm pt$ but his proof works for general $Z$. For convenience of 
the reader we present his proof.

\begin{thm}\label{c-n-sigma}
Assume that $W\to Z=\Spec A$ is a projective morphism from a smooth variety, 
$w\in W$ a closed point,  and $D$ a Cartier divisor on 
$W$. Assume further that for some effective divisor $C$ there exist an infinite 
sequence of positive rational numbers $t_1>t_2>\cdots$ with $\lim t_i=0$, 
and effective $\Q$-divisors 
$N_{i}\sim_\Q D+t_iC$ with $w\notin \Supp N_{i}$. Then, there is a very ample$/Z$ divisor 
$G$ on $W$ such that $w\notin \Bs|mD+G|$ for any $m>0$.  
\end{thm}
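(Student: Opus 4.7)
The plan is to construct, for each $m\geq 1$, an effective integer divisor $\sim mD+G$ not containing $w$ by combining the hypothesis divisors $N_i$ with sections of a ``correction'' divisor of the form $G$ minus a controlled multiple of $C$. The guiding $\Q$-linear equivalence is
\[
mD+G\sim_\Q mN_i+(G-mt_iC),
\]
in which $mN_i$ is $\Q$-effective and avoids $w$; the task is to upgrade this to an integer statement producing an honest section of $mD+G$ not vanishing at $w$.

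First I would clear denominators: for each $i$, let $b_i$ be the least positive integer with $F_i:=b_iN_i$ integer Cartier, so $F_i\sim b_iD+a_iC$ with $a_i:=b_it_i\in\Z_{>0}$, $F_i$ effective, and $w\notin \Supp F_i$. Fix a very ample$/Z$ divisor $H$ on $W$ and take $G=nH$ for $n$ large to be chosen, so that $G-kC$ has a section not vanishing at $w$ for $k$ in a fixed range to be determined (e.g.\ by making $G-KC$ itself very ample for some $K\geq 0$). For $m\geq 1$ I would write $m=\sum_{j} k_jb_{i_j}$ using a finite collection of indices so that the accumulated $C$-cost $\sum_j k_ja_{i_j}$ is bounded by a constant independent of $m$; then the product of canonical sections $\prod_j s_{i_j}^{k_j}\in H^0(mD+(\sum_j k_ja_{i_j})C)$ avoids $w$, and multiplying by any section of $G-(\sum_j k_ja_{i_j})C$ not vanishing at $w$ yields the desired section of $mD+G$.

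The main obstacle is arranging such a decomposition of $m$ with bounded total $C$-cost uniformly in $m$: a naive single-$N_i$ choice runs into the tradeoff $t_i\leq 1/m$ versus $b_i\leq m$, which cannot simultaneously hold. To overcome this I would use an inductive reduction: pick an index $i$ with $b_i$ comparable to (but at most) the current remainder, write $m=kb_i+r$ with $0\leq r<b_i$ so that the main contribution has $C$-cost $ka_i\leq mt_i$ (which can be made arbitrarily small by taking $i$ large, possibly after passing to a subsequence of the $t_i$), and recurse on the residue $r<b_i$. Since the residues strictly decrease, the recursion terminates in finitely many steps; controlling the per-step costs via $t_i\to 0$ yields a convergent (hence bounded) total cost, and calibrating $G$ positive enough to absorb this finite total cost finishes the proof.
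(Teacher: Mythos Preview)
Your elementary approach has a genuine gap in the combinatorial decomposition. After clearing denominators you have $a_i=b_it_i\in\Z_{>0}$, hence $a_i\ge 1$ and $t_i\ge 1/b_i$. In the recursion you must take $b_{i_j}\le m_{j-1}$, and then the cost of that step is $k_ja_{i_j}\ge a_{i_j}\ge 1$; moreover nothing in the hypotheses bounds $mt_{i_1}$ at the first step. Concretely, suppose $t_i=2^{-i}$ and the given $N_i$ happen to have denominators $b_i=4^i$ (so $a_i=2^i$). For $m=4^n$ any representation $m=\sum_j k_j b_{i_j}$ with $k_j\ge 0$ must use indices $i_j\le n$, and then
\[
4^n=\sum_j k_j 4^{i_j}=\sum_j (k_j 2^{i_j})\cdot 2^{i_j}\le \Big(\sum_j k_j 2^{i_j}\Big)\cdot 2^n,
\]
so the total $C$-cost $\sum_j k_j a_{i_j}=\sum_j k_j 2^{i_j}$ is at least $2^n=\sqrt{m}$, unbounded in $m$, regardless of which subsequence you pass to. The tension is intrinsic: you need $b_i\le m$ to divide, but $b_it_i\ge 1$ then forces the step-costs up, and the spacing of the available $b_i$ is completely uncontrolled by the bare hypothesis $t_i\to 0$.

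The paper's proof sidesteps denominators entirely. One blows up $w$ to get $f\colon W'\to W$ with exceptional divisor $E$, chooses $G$ so that $H:=f^*G-K_{W'}-E$ and $H-\epsilon C'$ are ample$/Z$, and for each $m$ picks $i$ with $mt_i<\epsilon$. Writing
\[
mD'+G'-E\ \sim_\Q\ K_{W'}+(H-mt_iC')+mN_i',
\]
Nadel vanishing for the multiplier ideal sheaf $\x{I}_i$ of $mN_i'$ kills the relevant $H^1$; since $w\notin\Supp N_i$, this ideal is trivial along $E$, and the resulting surjection onto $H^0(E,(mD'+G')|_E)$ produces an honest section of $mD+G$ not vanishing at $w$. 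The multiplier-ideal machinery is exactly what lets one exploit the $\Q$-divisor $mN_i$ directly, without the denominator explosion your scheme runs into; the cohomological input is essential here, not a convenience.
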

\begin{proof}
Let $f\colon W'\to W$ be the blow up at $w$ with $E$ the exceptional divisor, $D'=f^*D$, $C'=f^*C$, and 
$N_{i}'=f^*N_{i}$. Let $G$ be a very ample$/Z$ divisor on $W$ such that 
$H:=f^*G-K_{W'}-E$ is ample$/Z$ and $H-\epsilon C'$ is also ample$/Z$ for some 
$\epsilon>0$. Put $G'=f^*G$. For each $m> 0$, we can write 
$$
mD'+G'=K_{W'}+E+H+mD'=K_{W'}+E+H-mt_iC'+m(t_iC'+D')
$$
$$
\sim_\Q K_{W'}+E+H-mt_iC'+mN_{i}'
$$
where  we choose $t_i$ so that $mt_i<\epsilon$.
By assumptions, $E$ does not intersect $N_{i}'$. Thus, the multiplier ideal sheaf 
$\x{I}_i$ of $mN_{i}'$ is isomorphic to $\x{O}_{W'}$ near $E$. In particular, we have 
the natural exact sequence 
$$
0\to \x{I}_i(mD'+G'-E) \to \x{I}_i(mD'+G')\to \x{O}_{E}(mD'+G')\to 0
$$
from which we derive the exact sequence 
$$
H^0(W',\x{I}_i(mD'+G'))\to H^0(E,(mD'+G')|_E) 
\to H^1(W',\x{I}_i(mD'+G'-E))=0
$$
where the last vanishing follows from Nadel vanishing. On the other hand, 
$(mD'+G')|_E\sim 0$ hence some section of  $\x{I}_i(mD'+G')$ does not 
vanish on $E$. But 
$$
\x{I}_i(mD'+G')\subseteq \x{O}_{W'}(mD'+G')
$$ 
so some section of $\x{O}_{W'}(mD'+G')$  does not vanish on $E$ which simply 
means that $w$ is not in $\Bs|mD+G|$.\\
\end{proof}

The next result is useful for inductive treatment of finite generation as 
illustrated in [\ref{B-lc-flips}].

\begin{thm}\label{t-fg-pullbacks}
Let $f\colon X\to Y/Z$ be a surjective morphism of normal varieties, projective 
over an affine variety $Z=\Spec R$, and $L$ a Cartier divisor on $Y$. 
If $R(X/Z,f^*L)$ is a finitely generated $R$-algebra,  
then $R(Y/Z,L)$ is also a finitely generated $R$-algebra.
\end{thm}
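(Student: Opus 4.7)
The plan is to use Stein factorization to reduce to the case of a finite surjective morphism, and then conclude via the Artin--Tate lemma. First I would factor $f = g \circ h$ as a Stein factorization, with $h\colon X \to Y'$ having connected fibres (so $h_*\x{O}_X = \x{O}_{Y'}$ with $Y'$ normal) and $g\colon Y' \to Y$ finite surjective. By the projection formula,
$$
H^0(X, f^*mL) = H^0(Y', g^*mL \otimes h_*\x{O}_X) = H^0(Y', g^*mL)
$$
for every $m \ge 0$, so $R(Y'/Z, g^*L) \cong R(X/Z, f^*L)$ as graded $R$-algebras, and the finite generation of the latter transfers to the former. Hence we may replace $X$ by $Y'$ and assume $f$ is finite surjective.

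In the finite case the goal is to show that $R(X/Z, f^*L)$ is integral, and therefore module-finite, over $R(Y/Z, L)$. Fix a rational section $\eta$ of $\x{O}_Y(L)$. For a homogeneous $s \in H^0(X, f^*dL)$, the element $\phi := s/(f^*\eta)^d$ lies in the function field $K(X)$, which is a finite extension of $K(Y)$, so $\phi$ satisfies a monic equation
$$
\phi^e + a_1 \phi^{e-1} + \cdots + a_e = 0
$$
with $a_i \in K(Y)$. Multiplying through by $(f^*\eta)^{de}$ yields
$$
s^e + (a_1 \eta^d) s^{e-1} + \cdots + (a_e \eta^{de}) = 0,
$$
and a codimension-one valuative argument, using the normality of $Y$, shows that each $a_i \eta^{id}$ is a regular (not merely rational) section of $\x{O}_Y(idL)$, hence lies in $R(Y/Z, L)_{id}$. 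Thus every homogeneous element of $R(X/Z, f^*L)$ is integral over $R(Y/Z, L)$. Applying this to a finite set of $R$-algebra generators of $R(X/Z, f^*L)$ realises the latter as a finitely generated $R(Y/Z, L)$-module, and the Artin--Tate lemma applied to $R \subseteq R(Y/Z, L) \subseteq R(X/Z, f^*L)$ then delivers finite generation of $R(Y/Z, L)$ as an $R$-algebra.

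The main obstacle is the integrality descent in the middle step: the a priori integral equation lives only over $K(Y)$, and one must use normality of $Y$ together with a valuative estimate along every codimension-one point of $Y$ to verify that the coefficients actually lift to genuine sections of the line bundles $\x{O}_Y(idL)$ and not merely rational sections. Once this is in hand, the remaining ring-theoretic steps (integral plus finitely generated as an algebra implies module-finite, and Artin--Tate) are entirely standard, and the degenerate case $H^0(X, nf^*L) = 0$ for all $n > 0$ is immediate since then $R(Y/Z, L) = H^0(Y, \x{O}_Y)$ is a subring of the finite $R$-module $H^0(X, \x{O}_X)$.
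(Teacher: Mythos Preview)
Your argument is correct and takes a genuinely different route from the paper's. Both proofs begin with the same Stein-factorisation reduction to a finite surjective $f$. From there the paper exploits characteristic zero: it uses the normalised trace $\frac{1}{\deg f}\operatorname{Tr}_{X/Y}$ to split $f_*\x{O}_X\simeq \x{O}_Y\oplus\x{F}$, obtaining a graded $R(Y/Z,L)$-module retraction $\mu\colon R(X/Z,f^*L)\to R(Y/Z,L)$ of the natural inclusion $\pi$. This retraction forces $\pi^{-1}(I\cdot R(X/Z,f^*L))=I$ for every ideal $I$ of $R(Y/Z,L)$, so ascending chains in $R(Y/Z,L)$ stabilise because they do in the Noetherian ring $R(X/Z,f^*L)$; finite generation then follows from Noetherianity of the graded ring. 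Your proof instead shows that $R(X/Z,f^*L)$ is integral over $R(Y/Z,L)$---the valuative check you describe is exactly the verification that the characteristic polynomial of multiplication by $s/(f^*\eta)^d$ on $K(X)$, a priori over $K(Y)$, has its $i$-th coefficient lying in $H^0(Y,idL)$ once one localises at each codimension-one point of $Y$ and uses that $(f_*\x{O}_X)_\eta$ is free over the DVR $\x{O}_{Y,\eta}$---and then finishes with Artin--Tate.

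The trade-offs: your argument is characteristic-free (the paper's division by $\deg f$ is unavailable in positive characteristic), and it identifies the stronger structural fact that the larger ring is module-finite over the smaller. The paper's argument avoids both the valuative descent and the appeal to Artin--Tate, replacing them with a one-line ideal-contraction computation, but at the cost of being tied to characteristic zero.
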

\begin{proof}
If $f$ is a contraction, the claim is trivial by the projection formula.
So, replacing $f$ by the finite part of the Stein factorisation we may 
assume that $f$ is finite. We have a natural injective $\x{O}_Y$-morphism 
$\phi\colon \x{O}_Y\to f_*\x{O}_X$. Moreover, since we work over $\C$, we also have a 
$\x{O}_Y$-morphism $\psi\colon f_*\x{O}_X\to \x{O}_Y$ given by $\frac{1}{\deg f}{\rm Trace}_{X/Y}$ 
such that $\psi\phi$ is the identity morphism on $\x{O}_Y$ (cf. [\ref{Kollar-Mori}, Proposition 5.7]).
We actually have a splitting $f_*\x{O}_X\simeq \x{O}_Y\oplus \x{F}$ 
such that $\phi$ corresponds to the natural injection and $\psi$ to the first projection. 
So, for each $m>0$, we have induced maps 
$$
H^0(Y,\x{O}_Y(mL)) \to H^0(Y,(f_*\x{O}_X)(mL))\simeq  H^0(X,\x{O}_X(mf^*L))
$$
$$
H^0(Y,(f_*\x{O}_X)(mL))\simeq  H^0(X,(\x{O}_X(mf^*L))\to H^0(Y,\x{O}_Y(mL))
$$
where the upper map is injective and the lower map is surjective.
Thus, we get an injection $\pi\colon R(Y/Z,L)\to R(X/Z,f^*L)$ and a surjection 
$\mu\colon R(X/Z,f^*L)\to R(Y/Z,L)$ 
whose composition gives the identity map on $R(Y/Z,L)$. Here, $\pi$ is an $R(Y/Z,L)$-algebra 
homomorphism but $\mu$ is only an $R(Y/Z,L)$-module homomorphism. 
Let $I$ be an ideal of $R(Y/Z,L)$ and let $I'$ be the ideal of $R(X/Z,f^*L)$ generated 
by $\pi(I)$. We show that $\pi^{-1}I'=I$. Let $c\in \pi^{-1}I'$. We can write 
$\pi(c)=\sum \pi(a_i)\alpha_i$ where $a_i\in I$ and $\alpha_i\in R(X/Z,f^*L)$. 
Now, 
$$
c=\mu(\pi(c))=\mu(\sum \pi(a_i)\alpha_i)=\sum \mu(\pi(a_i))\mu(\alpha_i)=\sum a_i\mu(\alpha_i)
$$
hence $c$ is in the ideal $I$. 

 Assume that $R(X/Z,f^*L)$ is a finitely generated $R$-algebra. Then, $R(X/Z,f^*L)$ 
 is Noetherian.
Let $I_1\subseteq I_2\subseteq \cdots$ be a chain of ideals of $R(Y/Z,L)$ and let 
$I_1'\subseteq I_2'\subseteq \cdots$ be the corresponding chain of ideals in $R(X/Z,f^*L)$. 
Since $R(X/Z,f^*L)$ is Noetherian, the latter chain stabilises which implies that 
the former sequence also stabilises as $\pi^{-1}I_j'=I_j$. Therefore, $R(Y/Z,L)$ is Noetherian which is 
equivalent to saying that it is a finitely generated $R$-algebra because $R(Y/Z,L)$ 
is a graded $R$-algebra and the zero-degree piece of $R(Y/Z,L)$ is a finitely generated 
$R$-algebra.\\
\end{proof}



\vspace{2cm}

\flushleft{DPMMS}, Centre for Mathematical Sciences,\\
Cambridge University,\\
Wilberforce Road,\\
Cambridge, CB3 0WB,\\
UK\\
email: c.birkar@dpmms.cam.ac.uk

\vspace{0.5cm}

Fondation Sciences Math\'ematiques de Paris,\\
IHP, 11 rue Pierre et Marie Curie,\\
75005 Paris,\\
France


\begin{thebibliography}{99}


\bibitem{}\label{B-lc-flips}  {C. Birkar; {\emph{Existence of log canonical flips and a special LMMP.}} 	arXiv:1104.4981v1. }

\bibitem{}\label{B-zd}  {C. Birkar; {\emph{On existence of log minimal models and weak Zariski decompositions.}} 
arXiv:0907.5182v1. To appear in Math. Annalen. }


\bibitem{}\label{B-topics}  {C. Birkar; {\emph{Topics in algebraic geometry.}} 
Lecture notes of a graduate course.	arXiv:1104.5035v1 }

\bibitem{}\label{B}  {C. Birkar; {\emph{On existence of log minimal models.}}  {Compositio Math.} volume 145 (2009), 1442-1446. }

\bibitem{}\label{BCHM}  {C. Birkar, P. Cascini, C. Hacon, J. M$^c$Kernan; {\emph{Existence of minimal models
for varieties of log general type.}}  J. Amer. Math. Soc. 23 (2010), 405-468. }

\bibitem{}\label{BP}  {C. Birkar, M. P\u{a}un; {\emph{Minimal models, flips and finite generation : a tribute to 
V.V. SHOKUROV and Y.-T. SIU.}}  In "Classification of algebraic varieties", 
European Math Society series of congress reports (2010).}


\bibitem{}\label{EGA} J. Dieudonn\'e, A. Grothendieck; \emph{\'El\'ements de g\'eom\'etrie alg\'ebrique, EGA III-1}. Publications Math\'ematiques de l'IH\'ES. Available online.

\bibitem{}\label{Hartshorne} R. Hartshorne: \emph{Generalized divisors on Gorenstein schemes.} 
K-theory 8 (1994), 287-339.

\bibitem{}\label{Kollar-Mori}  {J. Koll\'ar, S. Mori; {\emph{Birational Geometry of Algebraic Varieties.}} Cambridge University
Press (1998).}

\bibitem{}\label{Nakayama} {N. Nakayama; {\emph{Zariski decomposition and abundance}}. 
MSJ Memoirs  {\bf 14}, Tokyo (2004).}

\bibitem{}\label{Schwede} K. Schwede; Generalized divisors and reflexive sheaves.

\bibitem{}\label{pl-flips} V.V. Shokurov; \emph{Prelimiting flips}. 
Proc. Steklov Inst. Math. 240 (2003), 75-213.

\end{thebibliography}
\end{document}